\newcommand{\R}{\mathbb{R}}
\newcommand{\RP}{\mathbb{RP}}
\newcommand{\C}{\mathbb{C}}
\newcommand{\ra}{\rightarrow}
\def\ra{\rightarrow}
\def\({\big(}
\def\){\big)}
\def\rp2{\mathbb{R}P^2}
\def\rpn{\mathbb{R}P^n}
\def\rpm{\mathbb{R}P^{n-1}}
\def\pglnone{PGL(n+1,{\mathbb R})}
\def\pglnonec{PGL(n+1,{\mathbb C})}
\def\glnonec{GL(n+1,{\mathbb C})}
\def\pglnc{PGL(n,{\mathbb C})}
\def\glnc{GL(n,{\mathbb C})}
\def\slnc{SL(n,{\mathbb C})}
\def\slnone{SL(n+1,{\mathbb R})}
\def\slnonepm{SL_{\pm}(n+1,{\mathbb R})}
\def\slnonepmc{SL_{\pm}(n+1,{\mathbb C})}
\def\slnonec{SL(n+1,{\mathbb C})}
\def\pslnc{PSL(n,{\mathbb C})}
\def\sononec{SO(n+1,{\mathbb C})}
\def\sonc{SO(n,{\mathbb C})}
\def\psononec{PSO(n+1,{\mathbb C})}
\def\psonc{PSO(n,{\mathbb C})}
\def\ononec{O(n+1,{\mathbb C})}
\def\onc{O(n,{\mathbb C})}
\def\pononec{PO(n+1,{\mathbb C})}
\def\ponc{PO(n,{\mathbb C})}
\def\slnpm{SL_{\pm}(n,{\mathbb R})}
\def\rn{{\mathbb R}^n}
\def\rnone{{\mathbb R}^{n+1}}
\def\ponone{PO(n,1)}
\def\sonone{SO(n,1)}
\def\sn{S^n}
\newtheorem{theorem}{Theorem}[section]
\newtheorem{prop}[theorem]{Proposition}
\newtheorem{cor}[theorem]{Corollary}
\newtheorem{lemma}[theorem]{Lemma}
\theoremstyle{definition}
\newtheorem{dfn}[theorem]{Definition}
\newtheorem{eg}{Example}
\newtheorem{remark}[theorem]{Remark}
\begin{document}
\title{The marked length spectrum of a projective manifold or orbifold.}
\author{Daryl Cooper}

\curraddr{Department of Mathematics, University of California at Santa Barbara}
\email{cooper@math.ucsb.edu}

\author{Kelly Delp}
\curraddr{Department of Mathematics, Buffalo State College}
\email{kelly.delp@gmail.com}

%\version

\begin{abstract}
A strictly convex real projective orbifold is equipped with a natural Finsler metric called the Hilbert metric.  In the case that the projective structure is hyperbolic, the Hilbert metric and the hyperbolic metric coincide.  We prove that the marked Hilbert length spectrum determines the projective structure only up to projective duality.  A corollary is  the existence of non-isometric diffeomorphic strictly convex projective manifolds (and orbifolds) that are isospectral.  The corollary follows from work of Goldman and Choi, and Benoist.
\end{abstract}

\maketitle

%%%%%%%%%%%%%%%%%%%%%%%%%%%%%%%%%%%%%%%%%%%%%%%%%%%
\section{Introduction}
%%%%%%%%%%%%%%%%%%%%%%%%%%%%%%%%%%%%%%%%%%%%%%%%%%%

Let $M$ be a  Riemannian manifold.  The marked length spectrum of $M$ is the function $\ell: \pi_1(M) \ra \R$ which assigns to each free homotopy class of loops the infimum of the length of loops in the class.  An outstanding problem in Riemmannian geometry is the question of marked length spectrum rigidity for nonpositively curved manifolds: if a compact manifold $M$ supports two nonpositively curved metrics $g$ and $g^\prime$ with the same marked length spectrum, is $(M,g)$ isometric to $(M,g^\prime)$?  This conjecture, which is attributed to Katok, appears in  \cite{burns_katok}.   For non-positively curved surfaces, the marked length spectrum determines the  metric \cite{croke},  \cite{otal}.   In \cite{bcg_rigidity}  the
authors prove the conjecture in the case of negatively curved locally symmetric manifolds.

In this paper we study compact, strictly convex, real projective orbifolds. We defer precise definitions to the next section. These objects include all hyperbolic manifolds. The Hilbert metric on such an $n$-orbifold, $Q,$ assigns  a length to every element of the orbifold fundamental group $\pi_1^{orb}Q.$ The Hilbert-length spectrum is the resulting map $\ell:\pi_1^{orb}Q\rightarrow{\mathbb R}.$ Such a structure has a unique dual which is another such structure. Our main result is the following:

\begin{theorem}\label{maintheorem} Let $Q^n$ be a compact, strictly convex, real projective orbifold. Two distinct structures on $Q$ have the same marked Hilbert-length spectrum iff each structure is the projective dual of the other.
\end{theorem}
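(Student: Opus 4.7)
The plan rests on the eigenvalue formula for Hilbert length. For a hyperbolic element $\gamma$ with holonomy $\rho(\gamma)\in\pglnone$, the Hilbert translation length along the axis of $\gamma$ equals
$$\ell(\gamma)=\half\log\bigl(\lambda_+(\rho(\gamma))/\lambda_-(\rho(\gamma))\bigr),$$
where $\lambda_\pm$ denote the largest and smallest absolute values of the eigenvalues of a lift of $\rho(\gamma)$ to $\slnone$. This formula comes from the cross-ratio definition of the Hilbert metric together with the identification of the two fixed points of $\gamma$ on $\partial\Omega$ with the projectivized top and bottom eigenlines of $\rho(\gamma)$.

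For the \emph{if} direction, the projective dual structure has holonomy given by the contragredient representation $\rho^*(\gamma)=(\rho(\gamma)^{-1})^T$, whose eigenvalues are the reciprocals of those of $\rho(\gamma)$. Hence $\lambda_+/\lambda_-$ is preserved under duality, and $\ell_{\rho}=\ell_{\rho^*}$.

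For the \emph{only if} direction, suppose $\rho_1$ and $\rho_2$ are holonomies of two distinct strictly convex structures on $Q$ with the same marked spectrum, so that $\gamma\mapsto\lambda_+(\rho_i(\gamma))/\lambda_-(\rho_i(\gamma))$ does not depend on $i$. By Benoist's theorem on the Zariski closure of a divisible convex holonomy, each closure is either conjugate to $\sonone$ or contains $\slnone$. In the $\sonone$ case one has $\rho^*$ conjugate to $\rho$, so the structure is self-dual; then Mostow rigidity ($n\ge 3$) or classical marked length spectrum rigidity for hyperbolic surfaces ($n=2$) forces $\rho_1=\rho_2$, contradicting distinctness. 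So both holonomies must be Zariski dense in $\slnone$.

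The main step, and the main obstacle, is to show that two Zariski-dense holonomies whose first simple-root Jordan projections $\gamma\mapsto\half\log(\lambda_+/\lambda_-)$ coincide must be related by conjugation or by the outer involution $g\mapsto(g^{-1})^T$. I would argue as follows. The Hilbert geodesic flow on the unit tangent bundle of $Q$ is Anosov (Benoist), so equality of marked periods yields a H\"older orbit equivalence between the two flows. A Livsic-cocycle refinement of this equivalence upgrades equality of just the first simple root of the Jordan projection to equality of the full Jordan projection, modulo the symmetry $\lambda\mapsto -w_0\lambda$ induced by the contragredient. Zariski density combined with Benoist's strong irreducibility then converts equal Jordan projections into conjugacy, possibly composed with contragredient, of $\rho_1$ and $\rho_2$, which is the desired dichotomy.
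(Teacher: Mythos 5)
Your \emph{if} direction is fine, and the reduction via Benoist's dichotomy is the right first move, but two points there already need repair. First, in this paper's normalization the translation length is $\log(\lambda_+/\lambda_-)$, with no factor $\frac{1}{2}$ (Lemma \ref{lemma:Hilbert_length}). Second, your case analysis silently assumes the two holonomies fall into the \emph{same} Benoist case: if $\overline{\rho_1(\pi_1^{orb}Q)}$ is conjugate to $\sonone$ while $\rho_2$ is Zariski dense, neither Mostow rigidity nor Otal--Croke applies (both require two hyperbolic, resp.\ negatively curved Riemannian, structures), so the mixed case is not excluded by your argument. You also never address torsion: on an orbifold the elliptic elements have length zero and are not proximal, which is why the paper passes to $k$-th powers for $k$ the least common multiple of the torsion orders.

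The genuine gap is your ``main step.'' You claim a Livsic-cocycle refinement upgrades equality of the single functional $\gamma\mapsto\log(\lambda_+/\lambda_-)$ of the Jordan projection to equality of the full Jordan projection modulo $-w_0$. Livsic theory applied to the reparametrization cocycle of the Anosov Hilbert flow only returns information about that one functional: it shows the two cocycles are cohomologous and the flows conjugate, but it gives no access whatsoever to the intermediate eigenvalues $\lambda_2,\dots,\lambda_n$, whose ratios constitute the rest of the Jordan projection. No mechanism is offered for recovering them, and that recovery is essentially the entire content of the theorem, so the step as written restates the difficulty rather than resolving it. The paper takes a different, elementary route around exactly this obstacle: Section \ref{section:root_ratio} shows, via resultants of root-ratio polynomials, that the pairs of proximal matrices sharing \emph{some} common eigenvalue ratio form a proper subvariety of $\slnonepm\times\slnonepm$ not containing $SO(n,1)\times SO(n,1)$; hence the Zariski closure $D$ of the image of $r_1\times r_2$ is a proper algebraic subgroup of the product; simplicity (Lemma \ref{normal_subgroup}) forces both coordinate projections of $PD$ to be injective, yielding an isomorphism $\phi$ with $r_2=\phi\circ r_1$; and Corollary \ref{pglautomorphism} identifies $\phi$ as inner or inner composed with duality. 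If you want to keep the dynamical flavor you would have to import an actual theorem in place of your ``refinement'' (e.g.\ results of Dal'bo--Kim or Benoist on Zariski-dense subgroups with proportional length data, which is close to Kim's original building-theoretic argument that this paper deliberately replaces); as it stands the key implication is unproven.
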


It is known that:
\begin{theorem}\label{selfdualishyp}
Let $Q^n$ be a compact, strictly convex, real projective orbifold.  Then $Q$ is self dual iff $Q$ is real hyperbolic.
\end{theorem}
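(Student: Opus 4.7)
The plan is to handle the two directions separately. The forward direction is a short check: if $Q$ is hyperbolic, its holonomy lies in $\mathrm{PO}(n,1)$, which preserves a Lorentzian form $b$ on $V = \R^{n+1}$. The Klein model realizes the universal cover as the projectivization of one component of $\{v : b(v,v) > 0\}$, and $b$ induces a $\mathrm{PO}(n,1)$-equivariant projective isomorphism $\mathbb{P}(V) \to \mathbb{P}(V^*)$ sending $\Omega$ to $\Omega^*$. This descends to the self-duality of $Q$.

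For the converse, the plan is to extract from self-duality a holonomy-invariant nondegenerate bilinear form $b$ on $V$ and show it must be Lorentzian. Self-duality provides an equivariant projective isomorphism $\phi: \Omega \to \Omega^*$; lifting to a linear map $V \to V^*$ yields a nondegenerate bilinear form $b(v,w) = \phi(v)(w)$. By strong irreducibility of the holonomy $\rho$ (due to Benoist for divisible strictly convex domains), Schur's lemma applied to $\phi$ and its transpose forces $b$ to be symmetric or antisymmetric. The condition $\phi(C) = C^*$ for the open cone $C \subset V$ over $\Omega$ makes $b$ positive on $C \times C$, ruling out antisymmetry. So $b$ is a $\rho$-invariant, nondegenerate, symmetric bilinear form, positive on $C$.

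The key step is to show $\partial C \subseteq \{v : b(v,v) = 0\}$. For any biproximal element $\gamma \in \rho(\pi_1^{orb}(Q))$, the attracting fixed ray in $\partial C$ is spanned by an eigenvector $v_+$ with eigenvalue $\lambda > 1$; since $\gamma$ preserves $b$, we have $b(v_+, v_+) = b(\gamma v_+, \gamma v_+) = \lambda^2 b(v_+, v_+)$, forcing $b(v_+, v_+) = 0$. By Benoist's theorem that attracting fixed points of biproximal elements are dense in $\partial\Omega$, continuity extends the vanishing of $b(v,v)$ to all of $\partial C$.

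Combining $C \subseteq \{b > 0\}$ with $\partial C \subseteq \{b = 0\}$ shows that $C$ is a connected component of the open set $\{v : b(v,v) > 0\}$. For signature $(p,q)$ with $p \geq 2$, this set is path-connected and contains a $p$-dimensional linear subspace, so any component contains lines through the origin, contradicting proper convexity of $C$. Thus $b$ must have signature $(1,n)$ (up to sign), $C$ is one of its two Lorentz cones, and $\Omega$ is the Klein ball; hence $Q$ is hyperbolic. The principal obstacle is verifying the hypotheses used from Benoist's theory---strong irreducibility of the holonomy and density of biproximal fixed points in $\partial\Omega$---since everything else is either a short calculation or a formal signature argument.
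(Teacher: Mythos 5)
Your argument is correct in outline, but it takes a genuinely different route from the paper's. The paper proves the converse direction (Theorem \ref{theorem:converse}) in two lines by reducing to Benoist's dichotomy: self-duality gives $P$ with $(A^t)^{-1}=PAP^{-1}$ for all $A\in\Gamma$, hence the non-trivial polynomial identity $\mathrm{trace}(A)=\mathrm{trace}(A^{-1})$ holds on $\Gamma\cap\slnone$, so $\Gamma$ is not Zariski dense; Theorem \ref{theorem:benoist} (the Zariski closure is either $\sonone$ or all of $\slnone$) then forces $\overline{\Gamma}=\sonone$ and $\Omega$ an ellipsoid. You instead build the Lorentz form directly: an invariant nondegenerate bilinear form extracted from the duality, symmetry via Schur, positivity on the cone, vanishing on $\partial C$ via proximal eigenvectors together with density of attracting fixed points (a fact the paper itself uses in Proposition \ref{groupgivesomega}), and a signature argument. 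The paper's route buys brevity at the cost of invoking the full strength of the dichotomy theorem; your route needs only the much softer irreducibility input plus density of fixed points, and it is more constructive, since it exhibits the quadratic form whose Klein ball is $\Omega$. Two points in your sketch need care. First, the linear lift of the projective intertwiner only satisfies $\phi\,\rho(\gamma)\,\phi^{-1}=c(\gamma)\,\rho^*(\gamma)$ for a character $c$ taking values in $\{\pm1\}$ (compare determinants in $\slnonepm$), so $b$ is a priori invariant only up to sign; positivity of $b$ on the $\Gamma$-invariant cone $C$ then forces $c\equiv1$, and in any case the eigenvector computation $\lambda^2 b(v_+,v_+)=\pm\, b(v_+,v_+)$ still kills $b(v_+,v_+)$. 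Second, Schur's lemma over $\R$ gives $b^t=\pm b$ only once you know the space of $\rho$-invariant bilinear forms is one-dimensional, i.e.\ that $\rho$ is absolutely irreducible rather than merely irreducible over $\R$; this does hold for groups dividing a strictly convex domain, but it is that stronger statement of Benoist's that you must quote.
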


Together these provide examples of iso-spectral structures which are not isometric. However, in these cases, the Hilbert metric is not Riemannian, only Finsler. 

\medskip

We now sketch the proof of the main theorem, making a few simplifying assumptions without mention.  We are given two monomorphisms of $G=\pi_1^{orb}Q$ into $\pglnone.$  Combining these gives an embedding of $G$ onto $\Gamma\subset\pglnone\times\pglnone.$ Let $\overline{\Gamma}$ denote the Zariski closure. It is an algebraic group. By a beautiful theorem of Benoist,  the projection of $\overline{\Gamma}$ onto each factor is either surjective or a conjugate of $\ponone.$ The latter is the hyperbolic case. In the former we now use the hypothesis on the length functions to show $\overline{\Gamma}$ is contained in a {\em proper} subvariety: in essence the translation length in the Hilbert metric of a matrix in $\pglnone$ is the logarithm of the ratio of two eigenvalues,  and equality of ratios of eigenvalues of a pair of matrices is an algebraic condition. Since $\pglnone$ is a simple Lie group, it follows that both coordinate projections of $\overline{\Gamma}$ are isomorphisms. Combining the inverse of one projection with the other projection gives an automorphism of $\pglnone$ taking one embedding of $G$ to the other. The theorem follows from the  classification of automophisms of $\pglnone$ %(and $\slnone$) 
namely the outer automorphism group is order two generated by the {\em global Cartan involution} given by the composition of transpose and inverse. This sends a structure to its dual. In the main proof we work over ${\mathbb C}$ rather than over ${\mathbb R}$ since this makes the algebraic geometry simpler. This completes the sketch.

\medskip
The main theorem is essentially due to Inkang Kim, \cite{kim}. However he omitted the possibility of dual structures and the consequent discussion of self-dual structures. Our proof follows his in outline, but at a crucial point we use elementary algebra in place of a more sophisticated argument using buildings.   We also extend Kim's result to compact orbifolds. 
In other work we study limits at infinity of these projective structures and we find that at infinity projective duality becomes isometry. This leads to some beautiful phenomena which are further explored in that work.

\medskip
The authors thank Darren Long and Morwen Thistlethwaite for helpful discussions.  This work was partially supported by NSF grant DMS-0706887. We also thank the referee for their careful reading and useful comments.  The first author also thanks the mathematics department at the University of Capetown, and especially David Gay, for hospitality during the preparation of this paper.

%%%%%%%%%%%%%%%%%%%%%%%%%%%%%%%%%%%%%%%%%%%%%%%%%%%
\section{Projective Manifolds and Orbifolds.}\label{section:background}
%%%%%%%%%%%%%%%%%%%%%%%%%%%%%%%%%%%%%%%%%%%%%%%%%%%

\subsection{Projective structures}

Real projective $n$-space $\rpn$ has automorphism group the projective general linear group $\pglnone,$ an element of which is called a {\em projective transformation.} If $P\cong\rpm$ is a codimension-1 projective subspace in $\rpn$ the complement $\rpn\setminus P$ is called an {\em affine patch.} An affine patch can be naturally identified with $\rn$ in a way which is unique up to affine transformations of $\rn.$  A subset $\Omega\subset\rpn$ is called a {\em strictly convex subset} if it has closure, $\overline{\Omega},$ which is a compact strictly convex subset of some affine patch $\rn.$ This means that $\partial\overline{\Omega}$ does not contain any line segment of positive length.

A {\em strictly convex real projective orbifold,} $Q,$ is the quotient $Q=\Omega/\Gamma$ where $\Omega$ is an open strictly convex set in $\rpn$ and $\Gamma$ is a discrete subgroup of $\pglnone$ which preserves $\Omega.$ The {\em orbifold fundamental group} $\pi_1^{orb}Q$ is defined to be $\Gamma.$  The {\em orbifold universal cover} is $\tilde{Q}=\Omega$ and the universal orbifold-covering projection is the quotient map $\pi:\tilde{Q}\rightarrow Q$ (see \cite{kapovich} p 139 for a more general definition). Observe that the orbifold fundamental group, $\pi_1^{orb}Q,$ is the group of homeomorphisms that cover the identity map. If $\Gamma$ acts freely on $\Omega$ then $Q$ is a {\em projective manifold.}

The natural map $\slnone\rightarrow\pglnone$ is an isomorphism when $n$ is even.   However, in all dimensions $Aut(\Omega)$ lifts to $\slnonepm.$  It is often easier for computations to work with  the latter group.   We include the following well known argument for the convenience of the reader. 
\begin{lemma}\label{lemma:lift}
Let $\Omega$ be a strictly convex bounded subset of an affine patch in $\rpn.$ Then there is a lift of $Aut(\Omega)$ to $\slnonepm$.
\end{lemma}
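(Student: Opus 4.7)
The plan is to lift $Aut(\Omega)$ by first lifting $\Omega$ itself to a properly convex open cone in $\rnone$, and then choosing, for each projective automorphism, the unique linear representative that preserves the cone and has determinant $\pm 1$. Fix the standard projection $\pi:\rnone\setminus\{0\}\to\rpn$ and identify the given affine patch with the image under $\pi$ of an affine hyperplane $H\subset\rnone$ not containing the origin. Let $\widetilde\Omega\subset H$ be the preimage of $\Omega$ in $H$, and set
\[
C=\{tv:v\in\widetilde\Omega,\ t>0\}.
\]
Since $\overline{\widetilde\Omega}$ is a compact convex subset of $H$ disjoint from the parallel linear hyperplane through the origin, $C$ is an open convex cone containing no line; working in coordinates where $H=\{x_{n+1}=1\}$ one sees at once that $C\cap(-C)=\emptyset$, so $\pi^{-1}(\Omega)=C\sqcup(-C)$ is its decomposition into connected components.

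Given $[A]\in Aut(\Omega)\subset\pglnone$, pick any $A\in\glnone$ representing $[A]$. Then $A$ permutes the two components of $\pi^{-1}(\Omega)$, so either $A(C)=C$ or $A(C)=-C$; in the latter case replace $A$ by $-A$, which represents the same projective class. Among positive scalar multiples $tA$ with $t>0$, the condition $t^{n+1}|\det A|=1$ determines a unique $t$, and $tA$ still preserves $C$. Declare this element of $\slnonepm$ to be the canonical lift $\widehat A$. The other preimage of $[A]$ in $\slnonepm$ is $-\widehat A$, which sends $C$ to $-C$, so $\widehat A$ is characterized uniquely by the two conditions that it preserves $C$ and lies in $\slnonepm$.

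This uniqueness immediately yields the homomorphism property: if $\widehat A$ and $\widehat B$ both preserve $C$ and lie in $\slnonepm$, then so does $\widehat A\,\widehat B$, whence $\widehat{AB}=\widehat A\,\widehat B$. Therefore $[A]\mapsto\widehat A$ is a group homomorphism $Aut(\Omega)\to\slnonepm$ lifting the inclusion into $\pglnone$, as required. The only geometric input is that $\pi^{-1}(\Omega)$ has two components rather than being connected, which uses boundedness of $\Omega$ in an affine patch (equivalently, properness of the cone); strict convexity of $\Omega$ plays no role in the argument, and the main thing to verify carefully is this two-component claim and the resulting uniqueness of the sign choice.
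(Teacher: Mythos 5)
Your proof is correct and is essentially the same argument as the paper's: the paper works with the two components of the preimage of $\Omega$ in the double cover $S^n$ and picks, for each projective automorphism, the unique representative $\pm A\in\slnonepm$ that preserves (rather than swaps) the components, which is exactly your choice of the representative preserving the cone $C$. Your write-up just spells out the determinant normalization and the uniqueness-implies-homomorphism step that the paper leaves implicit.
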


\begin{proof}
 Let $\pi:S^n\rightarrow \rpn$ denote the double cover.   Then $\pi^{-1}(\Omega)=\Omega_+\cup\Omega_-$  has two connected components in $S^n.$ An element of $Aut(\Omega)$ corresponds to a pair of matrices $\pm A\in\slnonepm$ which act on $S^n$ and permute these two components.  It follows that exactly one of this pair acts as the identity permutation. This defines the lift.
\end{proof}

From now on, in this paper we will use the term {\em projective orbifold} to mean strictly convex real projective orbifold. Furthermore, we will assume that it is of the form $\Omega/\Gamma$ with $\Omega$ a bounded strictly convex open subset of $\mathbb{R}^n$ and  $\Gamma$ a discrete subgroup of either $\pglnone$ or $\slnonepm$.

An {\em orbifold isomorphism} between projective orbifolds $Q=\Omega/\Gamma$ and $Q'=\Omega'/\Gamma'$ is a map $f:Q\rightarrow Q'$ which is covered by a homeomorphism $\tilde{f}:\Omega\rightarrow \Omega'.$ Given $f,$ the choice of $\tilde{f}$ is unique up to post-composition with an element of $\Gamma'.$  Such a choice induces an isomorphism $f_*:\pi_1^{orb}Q\rightarrow\pi_1^{orb}Q'$ of groups given by $f_*(\gamma)=\tilde{f}\circ\gamma\circ\tilde{f}^{-1}.$  The indeterminacy of this choice means $f_*$ is only defined up to conjugacy.

The map $f$ is a {\em projective isomorphism} if $\tilde{f}$ is the restriction of a projective transformation. In this case $\pi_1^{orb}Q$ and $\pi_1^{orb}Q'$ are conjugate (via $f_*$) subgroups of $PGL(n+1,\mathbb{R}).$

To define the space of projective structures on a given orbifold we choose one such structure as a base point. Given a projective orbifold $Q$ we define the {\em space, $C(Q),$ of strictly convex real  projective structures on $Q$} as follows. Let $\tilde{C}(Q)$ denote the set of all triples $(f,\Omega,\Gamma)$ where $R=\Omega/\Gamma$ is a projective orbifold and $f:Q\rightarrow R$ is an {\em orbifold} isomorphism called a {\em marking} which is not assumed to be a projective isomorphism. 

An element of $\tilde{C}(Q)$  determines a group monomorphism (up to conjugacy) $$hol:\pi_1^{orb}Q\rightarrow \pglnone$$ called the {\em holonomy} of the structure and is defined as $hol=f_*.$ The image of the holonomy is (up to conjugacy) $\Gamma.$  

We define an equivalence relation $\sim$ on $\tilde{C}(Q)$ by $(f_0,\Omega_0,\Gamma_0)\sim (f_1,\Omega_1,\Gamma_1)$  if there is a projective isomorphism $k:\Omega_0/\Gamma_0\rightarrow \Omega_1/\Gamma_1$ that conjugates $hol_0$ to $hol_1.$ This means $hol_1=\tilde{k}\circ hol_0\circ \tilde{k}^{-1}$ where $\tilde{k}:\Omega_0\rightarrow\Omega_1$ covers $k.$  Then we define $C(Q)=\tilde{C}(Q)/\sim$ to be the set of all equivalence classes $[(f,\Omega,\Gamma)].$

\begin{prop}[$\Gamma$ determines $\Omega$]\label{groupgivesomega} If $(f,\Omega,\Gamma)$ and $(f',\Omega',\Gamma)$ are in $\tilde{C}(Q)$ then $\Omega=\Omega'.$ \end{prop}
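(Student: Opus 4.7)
The plan is to prove that $\Omega$ and $\Omega'$ have the same topological boundary in $\rpn$. Since a properly convex open subset of $\rpn$ is determined by its boundary (as the unique bounded component of the complement in any affine patch containing it), an equality $\partial\Omega=\partial\Omega'$ forces $\Omega=\Omega'$. I would prove the boundaries coincide by exhibiting a common dense subset, namely the set of attracting fixed points in $\rpn$ of the infinite-order elements of $\Gamma$.

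First I would verify that $\Gamma$ contains many infinite-order elements. Since $Q$ is compact but its universal cover $\Omega$ is a non-compact open ball in $\rn$, $\Gamma$ must be infinite; as it is also finitely generated (because $Q$ is compact) and linear, Schur's theorem rules out its being torsion, so $\Gamma$ contains elements of infinite order. For any such $\gamma$, I would invoke a theorem of Benoist from the theory of divisible convex sets: because $\gamma$ preserves the strictly convex $\Omega$ and the action is cocompact, $\gamma$ is biproximal on $\rpn$, with a simple real eigenvalue of strictly largest modulus determining a unique attracting fixed point $p_\gamma^+\in\rpn$. Strict convexity of $\Omega$ together with $\gamma$-invariance forces $p_\gamma^+\in\partial\Omega$.

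Because $\gamma$ also preserves $\Omega'$, the same argument applied to $\Omega'$ yields $p_\gamma^+\in\partial\Omega'$. Hence the set $S:=\{p_\gamma^+:\gamma\in\Gamma\text{ of infinite order}\}$ lies in $\partial\Omega\cap\partial\Omega'$. The complementary input, again due to Benoist, is that $S$ is dense in $\partial\Omega$ (cocompact action on a strictly convex domain); by the same token, $S$ is dense in $\partial\Omega'$. Since $\partial\Omega'$ is closed in $\rpn$ and contains a set dense in $\partial\Omega$, we conclude $\partial\Omega\subset\partial\Omega'$; by symmetry $\partial\Omega=\partial\Omega'$, and therefore $\Omega=\Omega'$.

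The main obstacle, should one wish a self-contained proof, is the reliance on Benoist's two dynamical results: biproximality of every infinite-order element of a discrete cocompact group preserving a strictly convex domain, and density of the resulting attracting fixed points in $\partial\Omega$. These are nontrivial to establish from scratch but are standard in the literature on divisible convex sets, and my plan is to quote them directly.
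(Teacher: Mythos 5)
Your proposal is correct and follows essentially the same route as the paper: both identify $\partial\Omega$ with the closure of the attracting fixed points of the proximal elements of $\Gamma$ (a set depending only on $\Gamma$, hence shared by $\Omega$ and $\Omega'$), and then recover $\Omega$ as the distinguished component of the complement of this set. The only cosmetic difference is in the last step, where the paper singles out $\Omega$ as the component of $\rpn\setminus\partial\Omega$ that is a cell, while you single it out as the one that fits in an affine patch; both work for $n\ge 2$.
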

\begin{proof}  Let $K$ denote the closure of the set of attracting fixed points of proximal (see defintion \ref{dfn:proximal}) elements of $\Gamma.$ Since $\Omega/\Gamma$ is compact  $K=\partial\Omega.$  The domain $\Omega$ is strictly convex and therefore $K$ is contained in an affine patch of $\rpn$.  This implies $K$ separates $\rpn$ into two connected components, one of which is $\Omega$ and the other is homeomorphic to the normal bundle of a codimension one projective subspace, thus not a cell. Thus $\Omega$ is characterized as the component of $\rpn\setminus K$ which is a cell.\end{proof}

\begin{cor}[holonomy determines structure]\label{holgivesstructure} The map $$Hol:C(Q)\rightarrow Hom(\pi_1^{orb}Q,\pglnone)/\pglnone$$ which sends a structure to the conjugacy class of its holonomy is injective.
\end{cor}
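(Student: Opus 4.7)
The plan is to reduce the statement directly to the preceding Proposition \ref{groupgivesomega}. Suppose two classes $[(f_0,\Omega_0,\Gamma_0)]$ and $[(f_1,\Omega_1,\Gamma_1)]$ in $C(Q)$ are sent by $Hol$ to the same conjugacy class, so there exists $g\in\pglnone$ with $hol_1=g\cdot hol_0\cdot g^{-1}$; in particular $\Gamma_1=g\Gamma_0 g^{-1}$. The goal is to promote this conjugating element to a projective isomorphism of the two structures.

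The key step is to transport the first structure by $g$. Set $\Omega_0':=g(\Omega_0)$, which is again a bounded strictly convex open set, now preserved by $\Gamma_1=g\Gamma_0 g^{-1}$. Because $g$ is a projective transformation, it descends to a projective isomorphism $\bar g:\Omega_0/\Gamma_0\to\Omega_0'/\Gamma_1$. The composition $f_0':=\bar g\circ f_0$ is an orbifold isomorphism $Q\to\Omega_0'/\Gamma_1$ (though typically not a projective isomorphism from the base structure), so $(f_0',\Omega_0',\Gamma_1)\in\tilde C(Q)$.

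Now apply Proposition \ref{groupgivesomega} to the two elements $(f_0',\Omega_0',\Gamma_1)$ and $(f_1,\Omega_1,\Gamma_1)$ of $\tilde C(Q)$, which share the same holonomy group $\Gamma_1$: the proposition forces $\Omega_0'=\Omega_1$, i.e. $g(\Omega_0)=\Omega_1$. Therefore $g$ itself covers a projective isomorphism $k:\Omega_0/\Gamma_0\to\Omega_1/\Gamma_1$, and by construction $k$ conjugates $hol_0$ to $hol_1$. This is exactly the condition defining $\sim$, so $[(f_0,\Omega_0,\Gamma_0)]=[(f_1,\Omega_1,\Gamma_1)]$ in $C(Q)$, proving injectivity of $Hol$.

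There is no serious obstacle here; the only bookkeeping item is verifying that $f_0'=\bar g\circ f_0$ is a legitimate marking, which is immediate since $\bar g$ is a projective (hence orbifold) isomorphism. All the real content has already been extracted in Proposition \ref{groupgivesomega}, where strict convexity is used to characterize $\Omega$ intrinsically from $\Gamma$ via closures of attracting fixed points of proximal elements.
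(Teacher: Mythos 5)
Your proof is correct and follows essentially the same route as the paper: both arguments rest entirely on Proposition \ref{groupgivesomega}, using the fact that conjugating $\Gamma_0$ by $g$ transports its canonically determined domain to $g(\Omega_0)$, which must then coincide with $\Omega_1$. Your version merely spells out the paper's one-line appeal to ``using \ref{groupgivesomega} again'' by explicitly exhibiting the transported triple $(f_0',g(\Omega_0),\Gamma_1)$ in $\tilde C(Q)$, which is a reasonable amount of added bookkeeping but not a different idea.
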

\begin{proof} For $i=1,2$ suppose that $hol_i:\pi_1^{orb}Q\rightarrow\Gamma_i$ is the holonomy of $[(f_i,\Omega_i,\Gamma_i)]\in C(Q).$  By \ref{groupgivesomega} the images of $hol_i$ determines $\Omega_i.$ If $hol_1$ and $hol_2$ are conjugate by $A\in \pglnone$ using \ref{groupgivesomega} again shows that $A$ sends $\Omega_1$ to $\Omega_2.$  Thus the two structures are equivalent.
\end{proof}

\begin{theorem}[Benoist, \cite{benoist3}]\label{benoistvariety} Suppose that $Q$ is a closed strictly convex real projective orbifold. Then the image of $Hol$ in $Hom(\pi_1^{orb}Q,\pglnone)/\pglnone$  is a union of path  components of  the Euclidean topology.
\end{theorem}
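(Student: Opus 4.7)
The plan is to prove the image of $Hol$ is both open and closed in the Euclidean topology on $Hom(\pi_1^{orb}Q, \pglnone)/\pglnone$. Since the character variety is a real semialgebraic set and hence locally path connected in its Euclidean topology, any open-and-closed subset is a union of path components.

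\textbf{Openness.} I would invoke the Ehresmann--Thurston holonomy principle for $(\pglnone,\rpn)$-structures on the closed orbifold $Q$: the map sending a marked projective structure to its conjugacy class of holonomy is a local homeomorphism from the deformation space of such structures into the character variety. Combined with the injectivity of $Hol$ proved in Corollary \ref{holgivesstructure}, it remains to verify that strict convexity is an open condition among projective structures on $Q$. Given a strictly convex $(f_0,\Omega_0,\Gamma_0)$, a small perturbation of the holonomy yields, via the perturbed developing map, an embedding onto a convex domain $\Omega_1$ which is close to $\Omega_0$ in the Hausdorff topology on compact subsets of a fixed affine patch. Strict convexity of such a compact convex set is preserved under sufficiently small Hausdorff perturbations of bounded sets.

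\textbf{Closedness.} Suppose $\rho_k \to \rho_\infty$ with each $\rho_k$ the holonomy of a strictly convex structure $\Omega_k/\Gamma_k$. After conjugating the $\rho_k$ I may assume all $\Omega_k$ lie in a common bounded subset of a fixed affine patch. Hausdorff compactness yields a subsequential limit $\Omega_\infty$ which is closed, convex, and invariant under $\rho_\infty(\pi_1^{orb}Q)$. Three things must be checked: that $\Omega_\infty$ is properly convex, that $\rho_\infty(\pi_1^{orb}Q)$ acts properly discontinuously and cocompactly on $\Omega_\infty$, and that $\partial \Omega_\infty$ contains no nontrivial line segment. Proper convexity and cocompactness can be approached through uniform control on Hilbert translation lengths of a fixed finite generating set: continuity of the evaluation $\rho \mapsto \rho(\gamma)$ in $\pglnone$ together with the fact that none of the limit elements are trivial rules out both degeneration of $\Omega_\infty$ to a lower-dimensional convex set and loss of cocompactness of the action.

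The main obstacle is establishing strict convexity of the boundary of $\Omega_\infty$, since in principle Hausdorff limits of strictly convex domains can develop flat facets. Here I would appeal to Benoist's characterization (proved in the same paper \cite{benoist3}) that a divisible properly convex domain is strictly convex if and only if its dividing group is Gromov hyperbolic. Since $\pi_1^{orb}Q$ already divides the strictly convex domain $\Omega_0$, it is Gromov hyperbolic as an abstract group; this is an intrinsic property of the group and so passes to the isomorphic image $\rho_\infty(\pi_1^{orb}Q)$. Once proper convexity and cocompactness of the limit action have been verified, Benoist's characterization yields strict convexity of $\Omega_\infty$, so $\rho_\infty$ lies in the image of $Hol$ and closedness is established.
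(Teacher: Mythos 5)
The paper offers no proof of this statement---it is quoted as a theorem of Benoist with a citation to \cite{benoist3}---so there is nothing internal to compare against; I will assess your sketch on its own terms. Your open-and-closed strategy is indeed the one underlying Benoist's argument, but as written it has two genuine gaps. The openness step fails as stated: strict convexity of a compact convex body is \emph{not} preserved under small Hausdorff perturbations (a round disk is the Hausdorff limit of convex polygons, none of which is strictly convex), so your final sentence of that paragraph is false. Moreover, Ehresmann--Thurston only produces a nearby $(\pglnone,\rpn)$-structure; the assertion that the perturbed developing map is a diffeomorphism onto a convex domain is Koszul's openness theorem, a substantial result you are using without acknowledgment. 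The correct repair is to combine Koszul with the tool you already invoke for closedness: the perturbed structure is properly convex by Koszul, it is divided by a group isomorphic to $\pi_1^{orb}Q$, which is Gromov hyperbolic because it divides the strictly convex $\Omega_0$, and Benoist's characterization of strict convexity via hyperbolicity of the dividing group then applies to the perturbed domain.

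In the closedness step you correctly isolate what must be checked, and the appeal to Gromov hyperbolicity to exclude segments in $\partial\Omega_\infty$ is the right move, but the claim that proper convexity and cocompactness ``can be approached through uniform control on Hilbert translation lengths of a fixed finite generating set'' is not an argument. The genuine dangers are that $\Omega_\infty$ has empty interior or contains a complete affine line (hence is not properly convex), that $\rho_\infty$ fails to be faithful and discrete, or that the action on $\Omega_\infty$ is no longer cocompact. Nontriviality of the limits $\rho_\infty(\gamma)$ for a generating set rules out none of these; excluding them is the actual content of Benoist's closedness theorem and requires, for example, irreducibility and proximality of the limit representation together with a compactness argument for the domains in the Hausdorff topology on properly convex sets. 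As it stands, your proposal is a correct road map of the known proof with its two hardest legs missing.
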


A natural question is: to what extent does the orbifold fundamental group determine a convex projective structure? There is the issue of {\em moduli}: there may be different structures on a given isomorphism type of orbifold. But there is a second issue.
The {\em Borel conjecture} is that two closed aspherical manifolds with isomorphic fundamental groups are homeomorphic. 

The Borel Conjecture holds for word hyperbolic groups, \cite{bartels}. The fundamental group of a closed strictly-convex projective manifold is word hyperbolic \cite{benoist1}. Hence the Borel conjecture holds for strictly-convex projective manifolds. 

The  results above give a weak version of the Borel conjecture for orbifolds, but with isomorphism (orbifold-diffeomorphism)   in place of homeomorphism:
\begin{cor} If $G$ is a group then there are at most finitely many isomorphism classes of closed connected orbifolds $Q$ that admit a strictly convex projective structure such that $\pi_1^{orb}Q\cong G.$\end{cor}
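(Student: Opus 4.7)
The plan is to package the iso classes of such orbifolds as disjoint nonempty unions of path components inside a single character variety that is known to have only finitely many path components.

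First, I would argue that the dimension $n$ is determined by $G$. By Selberg's lemma, any linear group $\Gamma = \pi_1^{orb}Q \subset \pglnone$ has a finite-index torsion-free subgroup $\Gamma_0$. Then $\Omega/\Gamma_0$ is a closed aspherical manifold of dimension $n$ finitely covering $Q$, so the cohomological dimension of $\Gamma_0$ equals $n$, and therefore $n = \mathrm{vcd}(G)$ depends only on $G$. In particular, all such $Q$ share a common value of $n$, and we may work inside the single space
\[
X \;=\; \mathrm{Hom}(G,\pglnone)/\pglnone.
\]
Since closed strictly convex projective orbifolds have finitely generated orbifold fundamental groups, $\mathrm{Hom}(G,\pglnone)$ is a real algebraic variety; its quotient $X$ is therefore semi-algebraic and has only finitely many Euclidean path components.

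Next, for each isomorphism class $[Q]$ admitting a strictly convex projective structure with $\pi_1^{orb}Q \cong G$, fix some identification $\phi:\pi_1^{orb}Q \xrightarrow{\cong} G$ and consider $Hol(C(Q)) \subseteq X$. By Corollary \ref{holgivesstructure}, $Hol$ is injective, and $C(Q)$ is nonempty by hypothesis, so $Hol(C(Q))$ is a nonempty subset of $X$. By Benoist's Theorem \ref{benoistvariety}, this subset is a union of path components of $X$ (the finite indeterminacy in $\phi$ only permutes components by an action of $\mathrm{Aut}(G)$ on $X$, which does not affect the count).

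The key step, and the only one that really uses the setup of the section, is that distinct iso classes $[Q]$ and $[Q']$ give \emph{disjoint} subsets of $X$. Suppose holonomies $hol_Q$ and $hol_{Q'}$, representing structures on $Q$ and $Q'$ respectively, are conjugate by some $A \in \pglnone$. Then $A\Gamma_Q A^{-1} = \Gamma_{Q'}$ as subgroups of $\pglnone$. Proposition \ref{groupgivesomega} applied to each side says the group determines the domain, so $A\Omega_Q = \Omega_{Q'}$; hence $A$ descends to a projective (in particular orbifold) isomorphism $\Omega_Q/\Gamma_Q \to \Omega_{Q'}/\Gamma_{Q'}$, and composing with the markings gives $Q \cong Q'$ as orbifolds. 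Thus the iso classes inject into the finite set of path components of $X$, and the corollary follows. The main obstacle is really just being careful about how the indeterminacy of the identification $\pi_1^{orb}Q \cong G$ interacts with conjugacy in $X$, but $\mathrm{Aut}(G)$-translation preserves the decomposition into components, so the finiteness survives.
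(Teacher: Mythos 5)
Your proof is correct and follows essentially the same route as the paper: Benoist's theorem (Theorem \ref{benoistvariety}) makes each $Hol(C(Q))$ a union of path components, Proposition \ref{groupgivesomega} (via Corollary \ref{holgivesstructure}) makes distinct isomorphism classes occupy disjoint such unions, and a real algebraic variety has only finitely many path components. The one thing you add is the observation that $n=\mathrm{vcd}(G)$ is determined by $G$, which the paper leaves implicit but which is indeed needed to place all the structures inside a single $\mathrm{Hom}(G,\pglnone)/\pglnone$.
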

\begin{proof} The holonomy determines the isomorphism type of a projective orbifold. If two projective orbifolds are part of a one parameter family then they are isomorphic. Theorem \ref{benoistvariety}  implies there is at most one isomorphism type of $Q$ for each path component of  $Hom(\pi_1^{orb}Q,\pglnone)/\pglnone$ in the Euclidean topology. Since a real algebraic variety has only finitely many path components, the result follows.\end{proof}

\subsection{The Hilbert Metric}
\begin{dfn}[Hilbert metric]

Let $\Omega$ be a  bounded convex open subset of $\R^n$.   Let $p,q$ be distinct points in  $\Omega$ and  let $p_{\infty}$ and $q_{\infty}$ be the points on  the boundary of $\Omega$ colinear with $p$ and $q$ and ordered, so that $p$ separates $p_{\infty}$ from $q$.  The Hilbert distance between $p$ and $q$ is given by the logarithm of the cross ratio of the points $(p_{\infty},p,q,q_{\infty})$ namely:

\[d_H(p,q) = \log\left({\frac{\|q-p_{\infty}\|}{\|p-p_{\infty}\|}\frac{\|p-q_{\infty}\|}{\|q-q_{\infty}\|}}\right)\]
\end{dfn}

The usual argument adapts to show that the straight line with endpoints $p,q$ has minimal length, and in the case that $\Omega$ is strictly convex,  this is the unique path of minimal length with these endpoints \cite{delaharpe}.  
 
Let $Aut(\Omega)$ denote the group of projective transformations which map $\Omega$ onto itself.  Following Benoist we say a discrete subgroup $\Gamma\subset Aut(\Omega)$ {\em divides $\Omega$} if the quotient $Q=\Omega/\Gamma$ is compact.   Since cross ratio is preserved by projective transformations, $Aut(\Omega)$ (and thus the subgroup $\Gamma$) acts on $\Omega$ by isometries of the Hilbert metric.  It follows that the projective orbifold $Q$ inherits a metric from $d_{\Omega}$ called the {\em Hilbert metric} on $Q.$ The holonomy provides an identification $\pi_1^{orb}Q\equiv\Gamma.$

Given $g\in Aut(\Omega)$ the {\em Hilbert translation length} of $g$ is 
$$\ell(g)=\inf_{x\in\Omega} d_{\Omega}(x,gx).$$ Since $Aut(\Omega)$ acts by isometries it follows that conjugate elements have the same translation length. If $Q=\Omega/\Gamma$ is a strictly convex real projective orbifold we define the {\em Hilbert translation length function} $\ell: \pi_1^{orb}Q \ra {\mathbb R}$ where $\ell(g)$ is the Hilbert length of  $g$.  If $Q$ is a compact manifold then $\ell(g)$ is the length of the unique geodesic loop representative in the free homotopy class.
 
 From now on we assume $\Omega$ is strictly convex. The geodesics in the Hilbert metric on $\Omega$ are then (subarcs of projective) lines.  An isometry $g\in Aut(\Omega)$ is called {\em hyperbolic} if it preserves a line in $\Omega$ called the {\em axis}, and otherwise $\gamma$  fixes some point in $\Omega$ and is called {\em elliptic.}   When $g$ is hyperbolic we can compute $\ell(g)$ from the eigenvalues of $g$.  
 
 \begin{dfn}\label{dfn:proximal}
 A matrix $M\in\slnonepm$  is \textit{semi-proximal} if it has an attracting fixed point $p$ in $\RP^n;$    algebraically this just means $M$ has a unique eigenvalue of largest modulus, it has algebraic multiplicity $1,$ and  is real. The matrix $M$ is {\em proximal} if both $M$ and $M^{-1}$ are semi-proximal and, in addition, the eigenvalues of smallest and largest modulus have the same sign.     
\end{dfn}

Theorem 1.1 of \cite{automorphs_benoist} implies that hyperbolic isometries are proximal.    
 
 \begin{lemma}\label{lemma:Hilbert_length}
 Suppose $g\in Aut(\Omega)$ is hyperbolic and let $M \in \slnonepm$ be the proximal matrix which represents $g$.  Let $\lambda_+$ and $\lambda_{-}$ be the eigenvalues of $M$ with largest and smallest absolute value. Then the Hilbert translation length of $g$ is given by $ log(\lambda_+/\lambda_-)$.
 \end{lemma}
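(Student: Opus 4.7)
The plan is to produce a concrete point on which the Hilbert displacement equals $\log(\lambda_+/\lambda_-)$, giving the upper bound $\ell(g) \leq \log(\lambda_+/\lambda_-)$, and then use a stable-translation-length argument for the matching lower bound.

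First I would locate the axis from the eigendata of $M$. Since $M$ is proximal, it has a unique simple eigenvalue $\lambda_+$ of largest modulus with eigenvector $v_+$, and since $M^{-1}$ is also proximal, a unique simple $\lambda_-$ of smallest modulus with eigenvector $v_-$; by definition of proximal the two eigenvalues share a sign, so $\lambda_+/\lambda_- > 0$. Let $p_\pm = [v_\pm]\in\RP^n$ and let $L = \mathbb{P}(\mathrm{span}(v_+,v_-))$. For any $y\in\Omega$ whose homogeneous lift has nonzero $v_+$-component (generic, by openness of $\Omega$), one has $g^n y\to p_+$ in $\RP^n$; similarly $g^{-n} y\to p_-$. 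Hence $p_\pm\in\overline\Omega$, and convexity gives the chord $[p_-,p_+]\subset\overline\Omega$, with its interior in $\Omega$ by strict convexity. Both $L$ and $\overline\Omega$ are $g$-invariant, so the chord $L\cap\overline\Omega$ is too. Taking $x=[v_+ + v_-]$ on this chord, we have $gx = [\lambda_+ v_+ + \lambda_- v_-]$, and the four collinear points $p_-,x,gx,p_+$ lie on $L$; working in the affine chart $\{a+b = 1\}$ on the $(a,b)$-coordinates of $\mathrm{span}(v_+,v_-)$ and substituting into the Hilbert cross-ratio formula yields
\[
d_H(x,gx) \;=\; \log\frac{\lambda_+}{\lambda_-},
\]
so $\ell(g)\leq\log(\lambda_+/\lambda_-)$.

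For the reverse inequality, I would introduce the stable translation length $\bar\ell(g) := \lim_{n\to\infty} d_H(x,g^n x)/n$, which exists by Fekete's subadditive lemma applied to $a_n = d_H(x, g^n x)$ (subadditivity follows from the triangle inequality together with $g$ being an isometry). A standard triangle-inequality argument shows $\bar\ell(g)$ is independent of the base point $x$ and satisfies $\bar\ell(g) \leq d_H(y, gy)$ for every $y\in\Omega$, hence $\bar\ell(g)\leq\ell(g)$. Computing on the chord, $g^n x = [\lambda_+^n v_+ + \lambda_-^n v_-]$ and the same cross-ratio calculation gives $d_H(x,g^n x) = n\log(\lambda_+/\lambda_-)$, so $\bar\ell(g) = \log(\lambda_+/\lambda_-)$ and therefore $\ell(g) = \log(\lambda_+/\lambda_-)$. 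The main subtlety is this lower bound: one cannot a priori exclude that the displacement is smaller off the chord, and it is precisely the stable-length calculation, rather than any convexity argument on $\Omega$, that supplies the matching bound; the cross-ratio computation and the dynamical placement of $p_\pm$ in $\overline\Omega$ are otherwise routine.
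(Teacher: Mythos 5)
Your proof is correct. The computational core is the same as the paper's: both arguments identify the endpoints $p_\pm=[v_\pm]\in\partial\overline\Omega$, pass to the invariant chord, and extract $\log(\lambda_+/\lambda_-)$ from a cross-ratio (the paper phrases this as the translation length of a M\"obius transformation on $\mathbb{R}P^1$ in the one-dimensional hyperbolic metric, which is the same calculation). Where you genuinely diverge is the lower bound. The paper asserts that because the Hilbert metric of $\Omega$ restricts on the chord $A$ to the intrinsic Hilbert metric of $A$, ``the translation length of $g$ in $\Omega$ equals the translation length of the restriction of $g$ to $A$''; as you correctly observe, the restriction statement by itself only yields $\ell(g)\le d_A(q,gq)$, and the matching inequality is left implicit. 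You close this with the stable translation length: $\bar\ell(g)=\lim_n d_H(x,g^nx)/n$ is basepoint-independent, bounded above by every displacement $d_H(y,gy)$ via subadditivity, and equals $\log(\lambda_+/\lambda_-)$ by the cross-ratio computation with $\lambda_\pm^n$. This is a standard but genuinely different mechanism from anything the paper invokes, and it makes the infimum claim airtight without needing a projection-to-the-axis or convexity argument. Two small points worth making explicit if you write this up: choose the signs of the lifts $v_\pm$ so that $[v_++v_-]$ lies on the arc of $\mathbb{P}(\mathrm{span}(v_+,v_-))$ that is actually the chord in $\overline\Omega$ (the other arc lies outside), and note that the open chord lies in $\Omega$ because a boundary point in the relative interior of a segment with endpoints in $\overline\Omega$ would force the whole segment into $\partial\overline\Omega$, contradicting strict convexity.
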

\begin{proof}  Let $p_{\pm}$ be the attracting fixed points of $g^{\pm1}.$ These two points are in the boundary of $\Omega$ in $\rpn$ and correspond to the eigenvectors $v_{\pm}\in\rnone$ for the eigenvalues $\lambda_{\pm}.$ Let $A$ be the line segment in $\Omega$ with endpoints $p_{\pm}.$ Then $A$ is the axis of $g.$ The Hilbert metric on the interior of $A$ equals the restriction of the Hilbert metric on $\Omega$ to $A.$ It follows that the translation length of $g$ in $\Omega$ equals the translation length of the restricton of $g$ to $A,$ which  is 
$d_A(q,gq)$ for any choice of a point $q$ in the interior of $A.$ This reduces the problem of computing the translation length of $g$ to the case of ${\mathbb R}P^1.$ The action of $g$ on $A$ is determined by the restriction of $g$ to the invariant 2-dimensional subspace of $\rnone$ spanned by $v_{\pm}.$ This action is given by a mobius transformation fixing two points. The Hilbert metric on $A$ is the one dimensional hyperbolic metric, and an easy calculation (or see \cite{beardon}) gives that the translation length of a M\"obius transformation in this metric is as stated.\end{proof}

It follows from the next result that the notion of (projective) equivalence used in the definition of the space of strictly convex projective structures, $C(Q),$ on a specified orbifold $Q$ can be taken to be isometry of Hilbert metrics which preserve the marking.

\begin{prop}[Isometric implies equivalent]\label{theorem:not_isometric}
If $Q$ and $Q'$ are strictly convex projective orbifolds with isometric Hilbert metrics then they  are projectively equivalent.
\end{prop}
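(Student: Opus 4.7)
The plan is to reduce the statement to the classical fact that a Hilbert isometry between strictly convex domains is the restriction of a projective transformation, and then use equivariance to conclude projective equivalence of the quotients.

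First I would lift the orbifold isometry $\phi\colon Q\to Q'$ to a homeomorphism $\tilde\phi\colon\Omega\to\Omega'$ between the universal covers, using the orbifold covering space theory introduced earlier in the paper. The lift is equivariant for some isomorphism $\phi_*\colon\Gamma\to\Gamma'$, i.e.\ $\tilde\phi(\gamma\cdot x)=\phi_*(\gamma)\cdot\tilde\phi(x)$. Since the Hilbert metrics on $Q$ and $Q'$ are induced by those on $\Omega$ and $\Omega'$, the lift $\tilde\phi$ is itself an isometry $(\Omega,d_\Omega)\to(\Omega',d_{\Omega'})$.

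Next, I would show $\tilde\phi$ carries line segments to line segments. The observation immediately after the definition of the Hilbert metric (via \cite{delaharpe}) is that in a strictly convex domain, the straight segment between any two points is the \emph{unique} length-minimizing path. Any isometry between such spaces must therefore send maximal geodesics to maximal geodesics, so $\tilde\phi$ preserves collinearity of triples lying in $\Omega$.

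The main step is then to invoke the real Fundamental Theorem of Projective Geometry in its form for open sets: if $n\ge 2$, a bijection between open connected subsets of $\mathbb{RP}^n$ that sends intersections with lines to intersections with lines is the restriction of a unique element of $PGL(n+1,\mathbb{R})$. The standard way to see this is to pick $n+2$ points of $\Omega$ in general position, let $A\in PGL(n+1,\mathbb{R})$ be the projective transformation matching their images under $\tilde\phi$, and verify $A=\tilde\phi$ on $\Omega$ by reconstructing each point from incidence relations with the chosen frame. In the degenerate case $n=1$, $\Omega$ is an interval, the Hilbert metric is hyperbolic, and isometries are Möbius hence projective, handled directly. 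Applying this gives $A\in PGL(n+1,\mathbb{R})$ with $A|_\Omega=\tilde\phi$; equivariance then yields $A\Gamma A^{-1}=\Gamma'$ and $A(\Omega)=\Omega'$, so $A$ descends to a projective isomorphism $Q\to Q'$ covering $\phi$.

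The main obstacle is the collineation/extension step: one must pass from a bijection preserving collinearity on a bounded open convex set to a globally defined projective transformation. This is classical but demands care in verifying that local incidence information is enough to pin down $A$ uniquely. A minor subtlety in the lifting step is checking that $\phi_*$ is a \emph{bona fide} group isomorphism and not merely a coset-ambiguous object; this follows once $\tilde\phi$ is fixed, so it should be remarked on but not belabored.
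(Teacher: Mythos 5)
Your argument has a genuine gap at the very first step: you assume that the isometry $\phi\colon Q\to Q'$ lifts to a homeomorphism $\tilde\phi\colon\Omega\to\Omega'$ ``using orbifold covering space theory.'' But $\phi$ is given only as an isometry of the quotient \emph{metric spaces}; it is not given to be an orbifold map, and the universal orbifold covering $\rho\colon\Omega\to\Omega/\Gamma$ is a branched covering that fails to be a genuine covering map over the singular locus. So there is no covering-space theorem you can invoke to produce $\tilde\phi$ --- establishing that $\phi$ is an orbifold isomorphism (i.e.\ that it is covered by some $h\colon\Omega\to\Omega'$) is precisely the main content of the paper's proof, not a preliminary. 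The paper handles this by choosing a small ball $U$ away from the singular set, lifting $f|_U$ there, extending the lift $h$ radially along Hilbert geodesics through a nonsingular basepoint, and then proving the nontrivial claim $\rho'\circ h=f\circ\rho$ by a density argument with ``mirror geodesics'': one must check that the isometry carries the reflection (codimension-one) part of the singular locus of $Q$ compatibly to that of $Q'$, since a geodesic that bounces off a mirror is locally length-minimizing among paths constrained to touch the mirror, and this is what forces the radial extension to actually descend to $f$. Your proposal is complete for manifolds, where $\rho$ is an honest covering of a simply connected space and the lift exists for free, but the proposition is stated for orbifolds and that is where the work lies.

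The remainder of your argument is sound and differs from the paper only in the endgame: once one has an isometry $\tilde\phi\colon(\Omega,d_\Omega)\to(\Omega',d_{\Omega'})$, the paper simply cites Proposition~3 of de la Harpe (an isometry of strictly convex Hilbert domains is the restriction of a projective transformation), whereas you re-derive this from unique geodesics plus the local Fundamental Theorem of Projective Geometry for $n\ge2$, with $n=1$ handled by hand. That route is legitimate (and essentially reproves the cited result), and your equivariance argument $A\Gamma A^{-1}=\Gamma'$, $A(\Omega)=\Omega'$ correctly finishes the proof from there. To repair the proposal you would need to insert, before your lifting step, an argument that a metric isometry of compact strictly convex projective orbifolds is an orbifold isomorphism; the paper's radial-extension-plus-mirror-geodesic argument is one way, and it is not optional.
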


\begin{proof} 
Let $Q=\Omega/\Gamma$ and $Q'=\Omega'/\Gamma'.$ First we show that an isometry  $f:Q\rightarrow Q'$  is an orbifold isomorphism and therefore covered by a map $h:\Omega\rightarrow\Omega'$ so the following diagram commutes.

\[\xymatrix{\Omega \ar[r]^{h} \ar[d]_{\rho} & \Omega'  \ar[d]^{\rho^\prime} 
 \\ \Omega/\Gamma   \ar[r]_{f}   &
\Omega^\prime/ \Gamma^\prime  \\}\]
 
 Let $\Sigma Q$ denote the singular set of $Q$ and choose a small ball $U \subset Q - \Sigma Q$ so that $f(U) \subset Q^\prime - \Sigma Q^\prime$ and the quotient map $\rho: \Omega \rightarrow \Omega/\Gamma$ restricted to a lift $\tilde{U}$ of $U$ is an isometry.  Choose a lift $\tilde{U^\prime}$ of $f(U)$.  The map $h=(\rho^\prime|_{\tilde{U^\prime}})^{-1}\circ f \circ \rho$ is an isometry from $\tilde{U}$ to $\tilde{U^\prime}$. Let $\tilde{x}$ be a point in $\tilde{U}$ and $\tilde{y} = h(\tilde{x})$.      For any line $\ell$ through $\tilde{x}$ there is a corresponding line $\ell^\prime$  in $\Omega^\prime$ which passes through $\tilde{y}$ and contains $h(\tilde{U} \cap \ell)$.   An isometry between lines is determined by its restriction to an interval, so for every $\ell$ through $\tilde{x}$, extend $h$ isometrically from $\ell$ to $\ell^\prime$.   Since $\Omega$ is a convex ball in $\R^n$, the set of all lines $\ell-\tilde{x}$ foliate $\Omega-\tilde{x}$.   Therefore, we have a continuous extension of $h$ to all of $\Omega$ that, when restricted to each line through $\tilde{x}$, is an isometry.

\noindent
{\bf Claim.} $\rho^\prime \circ h = f \circ \rho$

Assuming this, it follows that $h$ covers $f$ and therefore, by definition, $f$ is an orbifold isomorphism. Moreover, $f$ is an isometry which implies the map $h$ is an isometry of the Hilbert domains $\Omega$ and $\Omega^\prime$.  Since the domains are strictly convex, an isometry is the restriction of a projective transformation (see proposition 3 of \cite{delaharpe}).  Therefore the orbifolds $Q$ and $Q^\prime$ are projectively equivalent. It only remains to prove the claim. 

We first discuss the local structure of the singular locus when there are reflections. Locally a projective orbifold is $V/G$ where $V$ is a open set in projective space and $G$ is a finite group of projective transformations that preserve $V.$ The singular locus in $V/G$ is the image of the union of the subsets of  fixed points of non-trivial elements in $G.$ This is the intersection with $V$ of a finite set of projective subspaces. The fixed subset of a projective transformation $T$ has codimension $1$ iff $T$ is a {\em reflection} which means it is conjugate to $\pm diag(-1,1,1,\cdots,1).$  A {\em mirror point} in an orbifold is a point with local group of order two generated by a reflection. The {\em mirror} of an orbifold is the collection of all mirror points. The mirror is an open subset of the singular locus. If the mirror is not empty it has codimension $1.$ The singular locus minus the mirror has codimension at least $2.$

A {\em mirror geodesic} in $Q$ is a map $\ell:{\mathbb R}\rightarrow Q$ that is a composition $\ell=\rho\circ\tilde{\ell}$ where $\tilde{\ell}:{\mathbb R}\rightarrow \Omega$ is a Hilbert  geodesic such that  $\ell$ is a path in $Q$ which only intersects the singular locus at  mirror points (where it bounces off the mirror like a light ray.) In addition we require that a mirror geodesic is not contained in the singular locus. By considering the local structure of $V/G$ it follows that the set of singular points on a mirror geodesic is discrete and that mirror geodesics are dense in $Q.$ In particular a geodesic which is disjoint from the singular locus is a mirror geodesic.

If $\gamma$ is a mirror geodesic in $Q$ then $f\circ\gamma$ is a mirror geodesic in $Q'.$ The reason is that in a Hilbert domain there is a unique geodesic connecting any two points, and it is a segment of a projective line. Thus, away from the singular locus, mirror geodesics are sent to mirror geodesics by an isometry. This property also holds true near a mirror point $x$ on a mirror geodesic $\gamma.$   This is seen by working in an orbifold neighborhood $V/G.$  Near $x$ there is a small segment $\delta$ of $\gamma$ containing $x$ in its interior which is length minimizing among all paths with with the same endpoints as $\delta$ subject to the constraint that the path contains a mirror point. 

Suppose $\ell$ is a line through $\tilde{x}$ such that $\rho\circ\ell$ is a mirror geodesic in $Q$ and $\rho^\prime\circ h\circ\ell$ is mirror geodesic in $Q'.$ The set  of all such $\ell$ is dense in $\Omega.$ Since $\rho\circ\ell$ is a mirror geodesic it follows from the above discussion that $f\circ\rho\circ\ell$ is a mirror geodesic. This mirror geodesic coincides with $\rho^\prime\circ h\circ\ell$ in $U$ and therefore they are equal everywhere. It follows that the claim holds on the dense subset of $\Omega$ consisting of all such $\ell.$ By continuity the claim holds everywhere.
  \end{proof}

 %%%%%%%%%%%%%%%%%%%%%%%%%%%%%%%%%%%%%%%%%%%%%%%%
\section{The Root Ratio polynomial}\label{section:root_ratio}
%%%%%%%%%%%%%%%%%%%%%%%%%%%%%%%%%%%%%%%%%%%%%%%%
The main result of this section is \ref{samelength} where we show the subset of $\slnpm\times\slnpm$ consisting of pairs of proximal matrices with the same Hilbert translation length is not Zariski dense. In view of \ref{lemma:Hilbert_length}, accomplishing  this involves a discussion of ratios of pairs of roots of a polynomial.

\begin{dfn}\label{dfn:root_ratio_poly} Let  $p(x)\in{\mathbb C}[x]$  be a degree $n$ polynomial with roots $\{\alpha_i\in{\mathbb C}:\ 1\le i\le n\ \}$ counted with multiplicity. 
A complex number $r$ is a \textit{root ratio} of  $p(x)$ if  $r=\alpha_i/\alpha_j$, where $\alpha_j$ is a non-zero root and $i\neq j$.

\medskip
Let $p(x)= x^n+a_1x^{n-1}+\cdots+a_n$  be a  monic polynomial of degree  $n\ge2$ in the ring ${\mathbb C}[x]$ with roots $\alpha_i$ counted with multiplicity.  The \textit{root ratio polynomial} of $p$ is:
\[R_p(r) = \prod_{i \neq j} (\alpha_i r - \alpha_j)\ \in\ {\mathbb C}[r]\]
\end{dfn}

\begin{prop}\label{rootratios} If $p$ is a polynomial of degree at least $2$ and $p(0)\ne0$  then the zeros of $R_p$ are exactly the root ratios of $p$.
\end{prop}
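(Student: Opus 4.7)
The plan is to unpack the definition of $R_p$ as an explicit product of linear factors and read off its zeros. Since we are working in $\mathbb{C}[r]$, a product of polynomials vanishes at $r$ iff at least one factor does, so the zero set of $R_p$ is simply the union of the zero sets of the factors $(\alpha_i r - \alpha_j)$ over pairs $i \neq j$.

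The first observation I would make is that the hypothesis $p(0)\neq 0$ forces every root $\alpha_i$ to be nonzero, since $p(0) = \prod_i(-\alpha_i)$ (up to sign) vanishes as soon as some $\alpha_i = 0$. In particular $\alpha_i \neq 0$ for every $i$ in the product defining $R_p$, so each factor $\alpha_i r - \alpha_j$ is a genuine degree-one polynomial in $r$ with unique root $r = \alpha_j/\alpha_i$.

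Next I would check the two inclusions. If $R_p(r_0) = 0$, then by the factorization there exist $i \neq j$ with $\alpha_i r_0 = \alpha_j$; since $\alpha_i \neq 0$ we conclude $r_0 = \alpha_j/\alpha_i$, which is a root ratio of $p$ (the denominator $\alpha_i$ is nonzero and $i \neq j$). Conversely, if $r_0$ is a root ratio, say $r_0 = \alpha_j/\alpha_i$ with $\alpha_i \neq 0$ and $i \neq j$, then the factor $\alpha_i r - \alpha_j$ appears in $R_p$ and vanishes at $r_0$, so $R_p(r_0) = 0$.

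There is no real obstacle here; the content of the proposition is essentially the observation that under the hypothesis $p(0) \neq 0$ every factor of $R_p$ contributes exactly one root ratio, and conversely every root ratio arises this way. The only point to be a little careful about is the role of the hypothesis: without $p(0)\neq 0$, a factor $\alpha_i r - \alpha_j$ with $\alpha_i = 0$ becomes the constant $-\alpha_j$ rather than a linear form in $r$, and such a factor either vanishes identically (making $R_p \equiv 0$) or contributes no root at all, breaking the bijection.
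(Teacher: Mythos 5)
Your proof is correct and is exactly the direct argument the paper intends: the paper states Proposition \ref{rootratios} without proof, treating it as immediate from the factorization $R_p(r)=\prod_{i\neq j}(\alpha_i r-\alpha_j)$ once $p(0)\neq 0$ forces every $\alpha_i$ to be nonzero. Your remark on why the hypothesis is needed (a factor with $\alpha_i=0$ degenerates to a constant) is a worthwhile observation, but there is nothing here that diverges from the paper's reasoning.
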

Since the product of the roots of $p(x)$ is $(-1)^na_n$ it follows that, \[R_p(r) = a_n^{n-1}r^{n(n-1)} + \textrm{lower order terms}.\]  Furthermore, since the coefficients of $R_p$ are symmetric polynomials in the $\alpha_i$, they are polynomials in the coefficients $a_i$. 

\medskip
\begin{eg}\label{eg_root_ratio}
The root ratio polynomial of $p(x)=x^2+bx+c$ is $$R_p(r)=c r^2 +(2c-b^2)r +c$$ and that of $p(x)=x^3+b x^2 + c x +d$ is 
$$\begin{array}{rcl}
R_p(r) & = &d^2 r^6 + (3d^2-bcd) r^5 +(c^3+b^3d-5bcd+6d^2) r^4\\
 & &  +(-b^2c^2+2c^3+2b^3d - 6bcd + 7d^2) r^3\\ & & +(c^3+b^3d-5bcd+6d^2)r^2
  +(3d^2-bcd)r + d^2.
 \end{array}$$
\end{eg}
By a {\em general polynomial} we mean a polynomial whose coefficients are independent transcendentals. Subsequently we may replace these by certain complex numbers. To this end we will let $K$ denote a polynomial ring over ${\mathbb C}$ and consider polynomials in $K[x]$ thought of as polynomials in the variable $x$ with coefficients in $K.$ A ${\mathbb C}$-module homomorphism $\theta:K\rightarrow{\mathbb C}$ will be called a {\em specialization.}  Given a specialization, there is a unique extension to a ${\mathbb C}$-module homomorphism between the polynomial rings $K[x]$ and ${\mathbb C}[x]$, which for convenience we also denote by $\theta$.

Given two general polynomials $$A(x)=a_0x^m+a_{1}x^{m-1}+\cdots a_m\qquad B(x)=b_0x^n+b_{1}x^{n-1}+\cdots+b_n$$ the {\em resultant} $R(A(x),B(x);x)\in K$ is a polynomial in the $m+n+2$ coefficients of these two polynomials obtained by eliminating $x,$ see \cite{lang}. If $$A(x)=a_0\prod_{i=1}^m(x-\alpha_i)\qquad\text{and}\qquad B(x)=b_0\prod_{j=1}^n(x-\beta_i).$$Then
 $$Res(A(x),B(x);x)=a_0^nb_0^m\prod_{i=1}^m\prod_{j=1}^n(\alpha_i-\beta_j).$$

From this one sees that the coefficients of the resultant are symmetric polynomials of the $\alpha_i$ and  the $\beta_j.$ The elementary symmetric functions generate the ring of all symmetric polynomials,  therefore the terms in the resultant are polynomial functions of the coefficients of $A(x)$ and $B(x).$ The crucial property for us is:

\begin{prop}\label{resultantcases} Suppose $A(x),B(x)\in K[x]$  and $\theta:K \rightarrow{\mathbb C} $ is a specialization. Then $\theta(Res[A(x),B(x);x])=0$ iff one of the following two cases happens:
\begin{enumerate}
\item[(1)]  there exists an $x_0\in{\mathbb C}$ which is a root of both $\theta(A(x))$ and $\theta(B(x))$.
\item[(2)] the specialization reduces the degree of both $A(x)$ and $B(x).$\end{enumerate}
\end{prop}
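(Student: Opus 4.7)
The plan is to realize $Res(A(x), B(x); x)$ as the determinant of the Sylvester matrix $S$, an $(m+n) \times (m+n)$ matrix whose entries are the $a_i$'s, $b_j$'s, and zeros. Since the determinant is polynomial in the entries and $\theta$ extends multiplicatively, one has $\theta(Res) = \det(\theta S)$, where $\theta S$ is formed from $\theta(a_i)$ and $\theta(b_j)$. This reduces the question to a purely numerical one: when does the Sylvester determinant of two polynomials in $\C[x]$, viewed with formal degrees $m$ and $n$, vanish?

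I would dispose of case (2) first: if $\theta(a_0) = \theta(b_0) = 0$, the first column of $\theta S$ contains only $a_0$'s, $b_0$'s, and zeros, hence is identically zero, so $\det(\theta S) = 0$ automatically. Next, when both $\theta(a_0)$ and $\theta(b_0)$ are nonzero, both specialized polynomials split completely over $\C$, and the Poisson product formula
\[
\theta(Res) \;=\; \theta(a_0)^n\,\theta(b_0)^m\,\prod_{i,j}(\alpha_i - \beta_j),
\]
with $\alpha_i,\beta_j$ the roots of $\theta(A),\theta(B)$, vanishes iff some $\alpha_i = \beta_j$, which is precisely case (1).

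The remaining case is the mixed one where exactly one leading coefficient vanishes, say $\theta(a_0)=0$ and $\theta(b_0)\neq 0$. Here I would invoke the asymmetric form of Poisson's formula
\[
Res(A(x),B(x);x) \;=\; (-1)^{mn}\, b_0^{\,m} \prod_{j=1}^n A(\beta_j),
\]
which is valid whenever the formal degree of $B$ is attained (and, crucially, makes sense even when $A$ drops degree). Specializing gives $\theta(Res)=(-1)^{mn}\theta(b_0)^m \prod_j \theta(A)(\beta_j)$, which vanishes iff some $\beta_j$ is a root of $\theta(A)$, again case (1). Case (2) fails here because $\theta(b_0)\neq 0$.

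The main obstacle I expect is the mixed case, where one degree drops and the other does not: in this case one cannot directly apply the product formula $\prod(\alpha_i - \beta_j)$ because the roots $\alpha_i$ of $\theta(A)$ are too few in number. The cleanest remedy is to observe that the asymmetric Poisson identity above is a generic algebraic identity in the coefficients (once $\prod A(\beta_j)$ is expressed via elementary symmetric polynomials in the $\beta_j$, which are in turn polynomials in the $b_j$'s), so it descends to a polynomial identity in $K$ itself, which then persists under any specialization preserving $b_0 \neq 0$. This avoids having to reprove anything about the Sylvester matrix after the degree drop.
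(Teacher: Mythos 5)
The paper offers no proof of this proposition: it defers entirely to the reference \cite{usingAG}, so there is no in-text argument to compare yours against. Your proof is correct and is essentially the standard textbook argument: the Sylvester-determinant description gives $\theta(\mathrm{Res})=\det(\theta S)$ (note this uses that a ``specialization'' is multiplicative, i.e.\ an algebra homomorphism, which is clearly what the paper intends despite writing ``$\mathbb{C}$-module homomorphism''), the vanishing first column disposes of case (2), the symmetric product formula handles the case where neither leading coefficient dies, and the asymmetric Poisson identity $\mathrm{Res}(A,B;x)=(-1)^{mn}b_0^{m}\prod_{j}A(\beta_j)$ --- which, as you correctly observe, is a polynomial identity in the coefficients once $b_0^{m}\prod_j A(\beta_j)$ is expanded via elementary symmetric functions, and hence survives any specialization with $\theta(b_0)\neq 0$ --- settles the mixed case, which is indeed the only delicate point. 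I see no gap; for the paper's actual application ($A$ and $B$ monic) only your generic case is ever used.
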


See \cite{usingAG} for details on the above definition, and justifications of the above  facts about the resultant. 
We are primarily interested in the case  when $A(x)$ and $B(x)$ are both {\em monic general polynomials}, that is a polynomial with transcendental coefficients of the form $x^n+a_1x^{n-1}+ \cdots  + a_n$.  In this case a specialization does not reduce the degree.

Resultants may be  used  to eliminate variables in systems of polynomial equations. This leads to a method to calculate the root ratio polynomial. It is straightforward from the definitions to show:

\begin{prop} The root ratio polynomial, $R_p(r),$ of $p(x)= x^n + a_1 x^{n-1}+\cdots + a_n$ satisifies
  $$Res(p(rx),p(x);x) = a_n(r-1)^nR_p(r).$$\end{prop}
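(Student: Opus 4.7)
The plan is to treat $r$ as an indeterminate (or a complex parameter) and use the product formula for the resultant in terms of roots, which Lang's formula makes available as soon as both polynomials are factored. Write $p(x)=\prod_{i=1}^n(x-\alpha_i)$, so that $a_n=(-1)^n\prod_i\alpha_i$. Viewing $p(rx)$ as a polynomial in $x$, it factors as $p(rx)=r^n\prod_i(x-\alpha_i/r)$, a degree $n$ polynomial in $x$ whose leading coefficient in $x$ is $r^n$ and whose roots in $x$ are $\alpha_i/r$. The resultant of two degree $n$ polynomials $A$ and $B$, with leading coefficients $a_0,b_0$ and roots $\{\beta_i\},\{\gamma_j\}$ respectively, is $a_0^n b_0^n\prod_{i,j}(\beta_i-\gamma_j)$. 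Applying this to $A=p(rx)$ and $B=p(x)$ yields
\[
\mathrm{Res}(p(rx),p(x);x)\;=\;(r^n)^n\prod_{i,j}\!\left(\tfrac{\alpha_i}{r}-\alpha_j\right)\;=\;\prod_{i,j}(\alpha_i-r\alpha_j),
\]
after pulling a factor of $1/r$ out of each of the $n^2$ terms.

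Next, I would split this double product into the diagonal part ($i=j$) and the off-diagonal part ($i\neq j$). The diagonal part equals
\[
\prod_{i=1}^n (\alpha_i-r\alpha_i)\;=\;(1-r)^n\prod_i\alpha_i\;=\;(-1)^n(r-1)^n\cdot(-1)^n a_n\;=\;(r-1)^n\,a_n .
\]
For the off-diagonal part, pull out a sign from each factor, using that $n(n-1)$ is even, to obtain
\[
\prod_{i\neq j}(\alpha_i-r\alpha_j)\;=\;\prod_{i\neq j}(r\alpha_j-\alpha_i),
\]
and then swap the labels $i$ and $j$ on the product (since it ranges over all ordered pairs with $i\neq j$) to recognize this as $\prod_{i\neq j}(r\alpha_i-\alpha_j)=R_p(r)$. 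Multiplying the two parts back together gives exactly the claimed identity
\[
\mathrm{Res}(p(rx),p(x);x)\;=\;a_n(r-1)^n R_p(r).
\]

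The only point that needs a word of care is that the root formula for $p(rx)$ used $1/r$, which is not legal when $r=0$. But both sides of the proposed identity are polynomials in $r$ (over the coefficient ring $K$ where the $a_i$ live, or over $\mathbb{C}$ after specialization), so the identity, verified on the Zariski-open set $r\neq 0$, extends to all $r$ by continuity. I don't foresee any real obstacle: the whole argument is bookkeeping with signs and indices, and the expected main pitfall is simply making sure the diagonal contribution is pulled off with the correct sign so that $(1-r)^n\prod\alpha_i$ turns into $(r-1)^n a_n$ rather than its negative.
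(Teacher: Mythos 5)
Your argument is correct and is exactly the ``straightforward from the definitions'' computation the paper intends: it applies the product formula for the resultant stated just above the proposition, extracts the diagonal factors $\prod_i(\alpha_i - r\alpha_i) = a_n(r-1)^n$, and identifies the off-diagonal product with $R_p(r)$ after the (even) sign change and relabelling. The sign bookkeeping and the remark that the identity between polynomials in $r$, verified for $r\neq 0$, persists at $r=0$ are both handled properly, so there is nothing to add.
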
 
  
  \begin{eg}
  Let $p(x) = x^2 + bx + c$, as in example \ref{eg_root_ratio}.  
  
  $$\begin{array}{rcl}Res(p(rx),p(x),x) & = & c^2 r^4-b^2 c r^3+(2b^2c - 2 c^2) r^2 - b^2 c r+c^2 \\ &= &c (r-1)^2(c r^2 + (2c-b^2)r + c) \\ & = & c (r-1)^2 R_p(r) \end{array}$$
  \end{eg}

\begin{dfn} Given two monic polynomials $p(x)$ and $q(x)$ the {\em common root ratio polynomial} is defined by \[C_{p,q} = Res(R_p(r),R_q(r);r).\]
\end{dfn}

The coefficients of $R_p(r)$ are polynomial in the coefficients of $p,$ therefore  $C_{p,q}$ is polynomial in the coefficients of $p$ and $q$.

\begin{eg}
The common root ratio polynomial for $p(x) = x^2 + ax + b$ and $q(x) = x^2 + cx + d$ is

\[C_{p,q}= (bc^2-d a^2)^2 .\]
The common root ratio polynomial for $p(x)=x^3+x^2+x+1$ and $q(x)=x^3+x^2+cx+d$ is
$$C_{p,q}=(c-d)^4(-c^2+2c^3+2d-4cd+d^2)^4.$$
\end{eg}

 The following is immediate:
\begin{prop}\label{cpq=0} Suppose $p(x),q(x)\in{\mathbb C}[x]$ are monic and both have degree at least $2$.  If $\alpha\in{\mathbb C}$ satisfies $R_p(\alpha)=R_q(\alpha)=0$ then $C_{p,q}=0.$
\end{prop}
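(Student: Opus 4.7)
The plan is to observe that the proposition is essentially an immediate consequence of the standard vanishing criterion for the resultant, applied to the polynomials $R_p(r)$ and $R_q(r)$ considered as elements of $\mathbb{C}[r]$. By definition, $C_{p,q}=\mathrm{Res}(R_p(r),R_q(r);r)$, and classical elimination theory (which is precisely the content of Proposition \ref{resultantcases} when applied with the trivial specialization $\theta=\mathrm{id}:\mathbb{C}\to\mathbb{C}$) tells us that this resultant vanishes whenever $R_p$ and $R_q$ share a common root in $\mathbb{C}$. Since the hypothesis supplies $\alpha\in\mathbb{C}$ with $R_p(\alpha)=R_q(\alpha)=0$, we are precisely in case (1) of that proposition, and we conclude $C_{p,q}=0$.

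First I would unwind the definition of $C_{p,q}$ as a resultant, making explicit that $R_p(r),R_q(r)$ are concrete polynomials in $\mathbb{C}[r]$ (their coefficients, by the remark just after Definition \ref{dfn:root_ratio_poly}, are polynomial expressions in the coefficients of $p$ and $q$, hence definite complex numbers). Next I would apply Proposition \ref{resultantcases} with $A(r)=R_p(r)$, $B(r)=R_q(r)$, and $\theta$ the identity. The ``degree reduction'' alternative (case (2)) is vacuous because no specialization is being made, so only case (1) can trigger the vanishing; and case (1) is given by the hypothesis that $\alpha$ is a common root. Therefore $C_{p,q}=0$.

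The only subtlety worth flagging is degeneracy: one should make sure $R_p$ and $R_q$ are nonzero polynomials so that ``common root'' is the correct notion of vanishing for the resultant. Since $p$ and $q$ are monic of degree at least $2$, each has at least one nonzero product factor in the defining product $\prod_{i\ne j}(\alpha_i r-\alpha_j)$, so $R_p$ and $R_q$ are honest nonzero polynomials in $r$ and the classical resultant machinery applies without issue. There is essentially no obstacle in this proof; it is a direct quotation of the defining property of the resultant, which is why the statement is prefaced as immediate.
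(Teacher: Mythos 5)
Your proof is correct and is exactly the paper's (unwritten) argument: the paper labels the proposition ``immediate'' because it is just case (1) of the resultant vanishing criterion, Proposition~\ref{resultantcases}, applied to $R_p(r)$ and $R_q(r)$ over $\mathbb{C}$ with the identity specialization. One small inaccuracy in your degeneracy remark: being monic of degree at least $2$ does \emph{not} guarantee that $R_p$ is a nonzero polynomial --- for $p(x)=x^2$ every factor $\alpha_i r-\alpha_j$ vanishes identically, so $R_p\equiv 0$ --- but this does not damage the argument, since in that case every $\alpha$ is a common root and the Sylvester determinant defining $C_{p,q}$ has zero rows, so the conclusion $C_{p,q}=0$ still holds.
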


\begin{prop}\label{prop:root_ratio}
Suppose $p(x),q(x)\in K[x]$ are monic polynomials and $C_{p,q}\in K$ is their common root ratio polynomial. Suppose $\theta:K\rightarrow{\mathbb C}$ is a specialization and $\theta(C_{p,q})=0.$ Then one of the following occurs  \begin{enumerate}
\item[(a)]  there is $r\in{\mathbb C}\setminus 0$ which is a root ratio of both $\theta(p(x))$ and $\theta(q(x)).$ 
\item[(b)]   $\theta(p(0))=0$ or $\theta(q(0))=0.$ 
\end{enumerate}
\end{prop}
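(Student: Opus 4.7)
The plan is to reduce the statement to Proposition \ref{resultantcases} applied to $A(r)=R_p(r)$ and $B(r)=R_q(r)$ as elements of $K[r]$, and then extract the two alternatives (a) and (b) by invoking Proposition \ref{rootratios} on the specialized polynomials $\theta(p)$ and $\theta(q)$.

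The preliminary observation is that specialization commutes with the root ratio construction. The coefficients of $R_p(r)$ (viewed as a polynomial in $r$) are polynomial expressions in the coefficients $a_1,\dots,a_n$ of $p(x)$, as noted in the discussion following Definition \ref{dfn:root_ratio_poly}. Consequently $\theta(R_p) = R_{\theta(p)}$ and $\theta(R_q) = R_{\theta(q)}$, and the hypothesis $\theta(C_{p,q})=0$ becomes $Res(R_{\theta(p)}(r),R_{\theta(q)}(r);r)=0$.

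Proposition \ref{resultantcases} then splits into two cases. In case (2), $\theta$ reduces the degrees of both $R_p$ and $R_q$. Reading off the displayed formula $R_p(r)=a_n^{n-1}r^{n(n-1)}+\cdots$, the leading coefficient is a power of the constant term $a_n=p(0)$, so the degree of $R_p$ drops under $\theta$ precisely when $\theta(p(0))=0$, and similarly for $q$. Hence case (2) immediately yields conclusion (b). In case (1), there exists $r_0\in\mathbb{C}$ which is a common root of $R_{\theta(p)}$ and $R_{\theta(q)}$. If either $\theta(p(0))=0$ or $\theta(q(0))=0$ then conclusion (b) holds and we are done; otherwise both constant terms are nonzero, and since $p$ and $q$ are monic of degree $\ge 2$, so are $\theta(p)$ and $\theta(q)$. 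Proposition \ref{rootratios} applies to both and identifies the zeros of $R_{\theta(p)}$ with the root ratios of $\theta(p)$, and similarly for $q$; thus $r_0$ is a common root ratio. Finally, $\theta(p(0))\neq 0$ forces all roots of $\theta(p)$ to be nonzero, so every root ratio is nonzero, and in particular $r_0\neq 0$. This yields conclusion (a).

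The only substantive bookkeeping, and the step most prone to error, is verifying that specialization commutes with the root ratio construction and matching the degree-drop alternative of Proposition \ref{resultantcases} with the constant-term condition in (b). Both are immediate from the explicit formula for the leading coefficient of $R_p$, so no serious obstacle is anticipated beyond routine verification.
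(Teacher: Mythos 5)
Your proposal is correct and follows essentially the same route as the paper: apply Proposition \ref{resultantcases} to $R_p(r)$ and $R_q(r)$, use monicity to get $\theta(R_p)=R_{\theta(p)}$, and match the degree-drop alternative to the vanishing of the leading coefficient $a_n^{n-1}=p(0)^{n-1}$. Your treatment of case (1) is if anything slightly more explicit than the paper's, since you spell out the appeal to Proposition \ref{rootratios} and the reason the common ratio is nonzero.
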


\begin{proof} Applying \ref{resultantcases} to $\theta(C_{p,q})=\theta(Res[R_p(r),R_q(r);r])$ yields two possibilities. The first possibility, case (1), is that there is $r\in{\mathbb C}$ such that $\theta(R_p(r))=0=\theta(R_q(r)).$  Since $p$ is monic, the specialization does not reduce the degree of $p(x)$, and $\theta(R_p)=R_{\theta(p)}$.  Similarly for $q(x)$, therefore  $R_{\theta(p)}(r)=R_{\theta(q)}(r)=0.$  Case (b) results if either $\theta(p(x))$ or $\theta(q(x))$ has a root of $0$.

The remaining possibility is case (2) of \ref{resultantcases}; namely that the specialization reduces the $r$-degree of both $R_p(r)$ and $R_q(r).$ Since $p$ is monic, the coefficient of the highest power of $r$ in $R_p(r)$ is $a_n^{n-1}$ where $a_n=p(0)$ and $n=\deg_x(p(x)).$ Therefore specialization reduces the $r$-degree of $R_p(r)$ iff  $\theta(p(0))=0$ which implies conclusion (b).
\end{proof}

\begin{dfn}\label{dfn:eigen_ratio}
Given a matrix $M \in GL(n,{\mathbb C})$ let $\lambda_1,\cdots,\lambda_n$ be the eigenvalues of $M$ counted with multiplicity. For $1\le i\ne j\le n$ the numbers $\lambda_i/\lambda_j$ are called the  {\em eigenvalue ratios} of $M.$ 
\end{dfn}

Note that for any non-zero complex number $k$, the matrices $M$ and $k  M$ have the same eigenvalue ratios, hence eigenvalue ratio is well-defined for elements of $ \pglnc$.

\begin{lemma}\label{evrr} Let $P$ be the characteristic polynomial of a matrix $M\in GL(n,{\mathbb C})$ with $n\ge2.$ Then 
the set of eigenvalue ratios of $M$ equals the set of  zeroes of the root-ratio polynomial $R_P.$\end{lemma}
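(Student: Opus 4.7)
The plan is to derive the lemma almost directly from Proposition \ref{rootratios}, after verifying that its hypotheses are met for $P$. The main points are that the roots of $P$ are exactly the eigenvalues of $M$ (by definition of the characteristic polynomial), and that $M\in GL(n,\mathbb{C})$ makes the polynomial well-behaved enough for the earlier proposition to apply.

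First I would check the degree and constant-term hypotheses. Since $P$ is the characteristic polynomial of an $n\times n$ matrix with $n\ge 2$, we have $\deg P = n \ge 2$. Since $M$ is invertible, $\det M \neq 0$, and the constant term of $P$ is $P(0) = (-1)^n \det M \neq 0$. In particular, zero is not a root of $P$, so all eigenvalues $\lambda_j$ are nonzero and every expression $\lambda_i/\lambda_j$ with $i\neq j$ is a well-defined nonzero complex number.

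Next I would match definitions. Writing $\lambda_1,\dots,\lambda_n$ for the roots of $P$ (counted with multiplicity), these are precisely the eigenvalues of $M$ (counted with multiplicity), so the set of root ratios of $P$ as in Definition \ref{dfn:root_ratio_poly} is exactly the set of eigenvalue ratios of $M$ as in Definition \ref{dfn:eigen_ratio}. Applying Proposition \ref{rootratios} to $P$ (whose hypotheses are verified in the previous step) gives that the zero set of $R_P$ equals the set of root ratios of $P$, which by the identification above equals the set of eigenvalue ratios of $M$.

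There is no real obstacle here; the content of the lemma is essentially a dictionary between the polynomial language of Section \ref{section:root_ratio} and the matrix language of Definition \ref{dfn:eigen_ratio}. The only point that requires any care is confirming $P(0)\neq 0$, which is where the assumption $M\in GL(n,\mathbb{C})$ (rather than a general $n\times n$ matrix) is used to rule out zero eigenvalues and thereby to apply Proposition \ref{rootratios} without having to worry about ill-defined ratios.
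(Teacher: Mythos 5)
Your proof is correct and follows essentially the same route as the paper: the paper's own proof simply observes that $P$ is monic of degree $n\ge 2$ with $P(0)\neq 0$ and then invokes Proposition \ref{rootratios}. You have merely spelled out the verification of $P(0)=(-1)^n\det M\neq 0$ and the identification of root ratios with eigenvalue ratios, which the paper leaves implicit.
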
 
\begin{proof} The characteristic polynomial of $P$ is monic and zero is not a root. The result now follows from \ref{rootratios}.\end{proof}

\begin{prop}\label{commonevratio}
For $n\ge 2$ let $A \subset \{(M,N) \,\, | \,\, M,N \in \slnpm\}$ be the set of pairs of matrices such that $M$ and $N$ have a common eigenvalue ratio.  Then $A$ is a real algebraic proper subvariety.
\end{prop}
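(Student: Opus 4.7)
The plan is to realize $A$ as the zero locus of a single real polynomial equation in the matrix entries, by applying the machinery of common root ratio polynomials developed in this section to the characteristic polynomials of $M$ and $N$. Given $(M,N)\in\slnpm\times\slnpm$, let $P_M(x)$ and $P_N(x)$ be their monic characteristic polynomials, whose coefficients are real polynomial functions of the matrix entries. The common root ratio polynomial $C_{P_M,P_N}$ is polynomial in the coefficients of $P_M$ and $P_N$, and hence defines a real polynomial $F$ in the matrix entries of $(M,N)$.

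Next I would verify that $A$ equals $\{F=0\}\cap(\slnpm\times\slnpm)$. One inclusion is immediate from Lemma \ref{evrr} and Proposition \ref{cpq=0}: a common eigenvalue ratio of $M$ and $N$ is a common root of $R_{P_M}$ and $R_{P_N}$, which forces $C_{P_M,P_N}=0$. For the converse I would apply Proposition \ref{prop:root_ratio} with the specialization $\theta$ given by substituting the entries of $M$ and $N$. Its alternatives are (a) a common nonzero root ratio of $P_M$ and $P_N$, which by Lemma \ref{evrr} is exactly a common eigenvalue ratio, or (b) $P_M(0)=0$ or $P_N(0)=0$. Alternative (b) is excluded on $\slnpm\times\slnpm$ because $P_M(0)=(-1)^n\det M=\pm 1$, and similarly for $N$; this is the one place the $\slnpm$ hypothesis is crucial. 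Thus $A$ is cut out by $F=0$ together with the defining polynomial equations of $\slnpm\times\slnpm$, so it is a real algebraic subvariety.

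It remains to show $A$ is proper, i.e.\ that $F$ does not vanish identically on $\slnpm\times\slnpm$. For this it suffices to exhibit one pair with no common eigenvalue ratio, which I would do by taking diagonal matrices $M=\operatorname{diag}(\mu_1,\ldots,\mu_n)$ and $N=\operatorname{diag}(\nu_1,\ldots,\nu_n)$ with all entries distinct within each tuple, products $\prod\mu_i,\prod\nu_j\in\{\pm 1\}$, and $\mu_i/\mu_j\ne\nu_k/\nu_l$ for all admissible indices; the forbidden ratio-equalities form finitely many codimension-one conditions in the parameter space, so such tuples clearly exist. The main subtle point of the whole argument is the exclusion of case (b) above, and this is automatic from the $\slnpm$ hypothesis, so everything else reduces to standard properties of resultants already recorded in this section.
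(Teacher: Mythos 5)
Your proposal is correct and follows essentially the same route as the paper: identify $A$ with the zero locus of $C_{P_M,P_N}$ on $\slnpm\times\slnpm$, using Lemma \ref{evrr} and Proposition \ref{cpq=0} for one inclusion and Proposition \ref{prop:root_ratio} (with case (b) ruled out by $\det M=\pm1$) for the other. The only difference is that you spell out properness with an explicit diagonal example, where the paper simply declares $A\ne\slnpm\times\slnpm$ to be clear.
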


\begin{proof} Since $\slnpm$ is defined by the polynomial equation $(\det M)^2=1$ it follows that $\slnpm\times\slnpm$ is a real algebraic variety. We will show $A$ is a subvariety by exhibiting it as the zero locus of one additional polynomial. It is clear that $A\ne\slnpm\times\slnpm$ thus the subvariety is proper. 

 Let $P(x)$  and $Q(x)$ be the characteristic polynomials of $M$ and $N$.  The common root ratio polynomial, $C_{P,Q},$ is a polynomial function of the entries of $M$ and $N$.  Indeed,  since $P$ and $Q$ are monic,  the common root-ratio polynomial  $C_{P,Q}$ is a polynomial in the coefficients of $P$ and $Q.$ These,  in turn, are polynomial functions of the entries of $M$ and $N$.  
 
 We claim that $A$ is the subvariety $V\subset\slnpm\times\slnpm$ defined by $C_{P,Q}=0.$ First assume that $(M,N)\in V.$  We apply proposition \ref{prop:root_ratio} to $C_{P,Q}.$   Since $M$ and $N$ are in $\slnpm$  it follows that $P$ and $Q$ have constant terms $\pm\det(M)$ and $\pm\det(N)$ that are non-zero, therefore case (b) does not occur. It now follows from case (a) that $P(x)$ and $Q(x)$ have a common root-ratio, which in turn implies the matrices $M$ and $N$ have a common eigenvalue ratio,  thus $V\subset A.$

Conversely, suppose $(M,N)\in A$, then $M$ and $N$ have a common eigenvalue ratio $\alpha.$ By \ref{evrr} it follows that $\alpha$ is a zero of the root-ratio polynomials $R_P$ and $R_Q.$  Proposition  \ref{cpq=0} implies that $C_{P,Q}$ is zero at $(M,N).$ Thus $(M,N)\in V$ and $A\subset V.$\end{proof}

\begin{dfn}\label{proximal_pair}
For $n\ge 2$ let $Q \subset \slnpm \times\slnpm$  be the union of $(I,I)$ with the set of pairs $(M,N)$ of proximal matrices  that  have the same Hilbert translation length.  The {\em proximal pair} variety is the Zariski closure of $Q$.
\end{dfn}

  \begin{cor}\label{samelength}  The proximal pair variety is a proper subvariety of $\slnpm \times \slnpm$ that does not contain $SO(n,1) \times SO(n,1)$.
  \end{cor}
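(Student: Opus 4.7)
The first task is properness of the proximal pair variety, and the plan is to observe that $Q$ is contained in the subvariety $A$ of Proposition \ref{commonevratio}. By Lemma \ref{lemma:Hilbert_length}, the Hilbert translation length of a proximal matrix $M$ is $\log(\lambda_+(M)/\lambda_-(M))$, so any pair $(M,N)$ of proximal matrices with equal Hilbert translation lengths shares the positive real eigenvalue ratio $\lambda_+/\lambda_-$ and therefore lies in $A$. The pair $(I,I)$ also lies in $A$ (every eigenvalue ratio of the identity equals $1$). Since $A$ is Zariski closed, the Zariski closure of $Q$—which is by definition the proximal pair variety—is contained in $A$, and Proposition \ref{commonevratio} gives that $A$ is a proper subvariety. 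This establishes the first assertion.

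For the second assertion it suffices to exhibit a pair $(M,N) \in SO(n,1) \times SO(n,1)$ with no common eigenvalue ratio, since such a pair automatically lies outside $A$ and hence outside the proximal pair variety. In the lowest dimensional case, take $M=\mathrm{diag}(2,1,\tfrac{1}{2})$ and $N=\mathrm{diag}(3,1,\tfrac{1}{3})$; both preserve the bilinear form $2x_0x_2 - x_1^2$ of signature $(2,1)$. A direct computation shows that the eigenvalue-ratio sets are $\{2^{\pm 1},2^{\pm 2}\}$ and $\{3^{\pm 1},3^{\pm 2}\}$, which are disjoint because $2$ and $3$ are multiplicatively independent in $\mathbb{R}^\times$. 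For higher dimensional $SO(n,1)$, analogous diagonal examples fail because the repeated eigenvalue $1$ on the orthogonal complement of the axis produces the common ratio $1$. One circumvents this by taking loxodromic elements $M,N$ whose restriction to the orthogonal complement of the axis is a rotation (with generic angles), so that every eigenvalue is simple and the eigenvalue-ratio sets become disjoint for a generic choice of parameters.

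The main obstacle is thus the construction in general dimension: the eigenvalues of a loxodromic element of $SO(n,1)$ are constrained to come in reciprocal pairs $\lambda,\lambda^{-1}$ together with unimodular conjugate pairs $e^{\pm i\theta_k}$, so the eigenvalue ratios have a rigid multiplicative structure of the form $\lambda^{a}e^{ib\theta_1}\cdots$ with small integer exponents. One verifies that each coincidence between a ratio of $M$ and a ratio of $N$ cuts out a proper real-analytic subvariety of the parameter space of pairs, so a generic pair has disjoint ratio sets. This produces the required element of $SO(n,1)\times SO(n,1)\setminus A$ and completes the proof.
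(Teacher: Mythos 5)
Your proof is correct and follows the same route as the paper: the proximal pair variety lies in the variety $A$ of Proposition \ref{commonevratio} by Lemma \ref{lemma:Hilbert_length}, and one then exhibits a pair in $SO(n,1)\times SO(n,1)$ with no common eigenvalue ratio. The paper merely asserts the existence of such a pair, whereas you supply an explicit example for the lowest-dimensional case and a genericity argument in higher dimensions --- a reasonable addition, since (as you correctly note) the naive diagonal examples acquire the common ratio $1$ once the eigenvalue $1$ is repeated.
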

\begin{proof} The Hilbert length of a proximal matrix is determined by the ratio of the largest and smallest eigenvalue by \ref{lemma:Hilbert_length}.  Thus the variety $A$ from proposition \ref{commonevratio} contains $Q$.   There are pairs of elements in $SO(n,1)$ with no common  eigenvalue ratios, hence $A$ does not contain $SO(n,1) \times SO(n,1)$.
\end{proof}

%%%%%%%%%%%%%%%%%%%%%%%%%%%%%%%%%%%%%%%%%%%%%%%%%%%
\section{Dual and Self-Dual Projective Structures}\label{section:proof_of_converse}
%%%%%%%%%%%%%%%%%%%%%%%%%%%%%%%%%%%%%%%%%%%%%%%%%%%
In this section we define the notion of dual projective structure and show that a structure is self-dual iff it is a hyperbolic structure.

An {\em ellipsoid} in $\rpn$ is the image under a projective map of the open unit ball, $B,$ in an affine patch. The subgroup of $\pglnone$ stabilizing $B$ is $Aut(B)\cong \ponone.$ The Hilbert metric on $B$ is isometric to hyperbolic space ${\mathbb H}^n$ and if $\Gamma$ is a discrete subgroup of $Aut(B)$ then $B/\Gamma$ is a (real) hyperbolic orbifold.

\begin{theorem}[Benoist,Thm 3.6 \cite{automorphs_benoist}]\label{theorem:benoist}
Let $\Gamma$ be a discrete subgroup of $\slnone$ that divides a properly convex open set $\Omega$ in $\rpn$. The Zariski closure $\overline{\Gamma}$ is $\sonone$ iff  $\Omega$ is an ellipsoid  and otherwise  $\overline{\Gamma}=SL(n+1,\R)$.
\end{theorem}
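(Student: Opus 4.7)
The forward direction (if $\Omega$ is an ellipsoid then $\overline{\Gamma}=\sonone$) is the easy one. If $\Omega$ is an ellipsoid, then, after a projective change of coordinates, $\Omega$ is the standard ball and $Aut(\Omega)\cong\sonone$ (via the embedding of the Klein model of hyperbolic space). Then $\Gamma\subset\sonone$ is discrete and cocompact because $Q=\Omega/\Gamma$ is compact; so $\Gamma$ is a lattice in the connected semisimple Lie group $\sonone$ with no compact factors. The Borel density theorem then gives $\overline{\Gamma}=\sonone$.

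The content is the reverse direction: if $\overline{\Gamma}\neq SL(n+1,\R)$ then $\Omega$ must be an ellipsoid. The plan is to analyze $H:=\overline{\Gamma}$ as an algebraic subgroup of $SL(n+1,\R)$ and force strong structural constraints from the fact that it preserves a properly convex open set on which it acts cocompactly.

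First, I would establish a few preliminary properties. (a) The natural representation of $\Gamma$ on $\R^{n+1}$ is irreducible: any $\Gamma$-invariant proper projective subspace would either meet $\Omega$ (contradicting strict convexity after projecting) or be disjoint from $\overline{\Omega}$ (contradicting cocompactness via a standard dynamical argument using proximal elements, which exist by \ref{theorem:not_isometric} and the existence of hyperbolic holonomy). Hence $H$ is also irreducible on $\R^{n+1}$. (b) Since $H$ is the Zariski closure of a subgroup preserving the properly convex cone $C(\Omega)\subset\R^{n+1}$ over $\Omega$, and preservation of $\overline{\Omega}$ is a closed (indeed semi-algebraic) condition respected by taking the Zariski closure, $H$ preserves $C(\Omega)$. (c) Because $H$ preserves a properly convex cone on which it acts with compact quotient (inherited from $\Gamma$), $H$ is reductive: one rules out a nontrivial unipotent radical using the fact that a unipotent element preserving a properly convex open set has all eigenvalues equal to $1$ and fixes a point on $\partial\Omega$, which is incompatible with Zariski density in a group acting cocompactly on $\Omega$.

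The heart of the argument is then to combine (a)--(c) with the classification of semisimple real algebraic groups admitting an irreducible representation that preserves a properly convex cone. This is exactly Vinberg's theory of homogeneous convex cones refined for the not-necessarily-homogeneous case. The identity component $H^0$ is semisimple with irreducible action on $\R^{n+1}$, and the highest weight of this representation must be such that the sum over the Weyl orbit produces a proper cone. Running through the Dynkin classification (and using that $H\subset SL(n+1,\R)$ acts on the ambient $(n+1)$-dimensional space), the only possibilities are $H^0=SL(n+1,\R)$ and $H^0=SO(n,1)^0$; the other candidates (e.g.\ $SL(m,\R)\times SL(k,\R)$ acting on $\R^m\otimes\R^k$, or $Sp$, $SU$, or exceptional groups in their minimal irreducible representations) either do not embed in $SL(n+1,\R)$ with an invariant convex cone of the right dimension, or their invariant cones are not \emph{strictly} convex.

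In the case $H^0=\sonone$, the unique (up to scale) $H$-invariant convex cone in $\R^{n+1}$ is the standard light cone, so $\Omega$ is an ellipsoid, as desired. The main obstacle I expect is step (c) (reductivity of $H$) together with the classification step: both rely on delicate dynamical input relating proximal elements, accumulation of orbits on $\partial\Omega$, and strict convexity, so the bulk of the work is translating the geometric hypothesis ``$\Gamma$ divides $\Omega$'' into algebraic constraints that force the Lie algebra of $H$ to be either $\mathfrak{sl}_{n+1}(\R)$ or $\mathfrak{so}(n,1)$.
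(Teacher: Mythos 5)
First, a point of reference: the paper does not prove this statement at all --- it is imported verbatim as Theorem~3.6 of Benoist's \emph{Automorphismes des c\^ones convexes} and used as a black box --- so there is no in-paper argument to compare yours against. Judged on its own terms, your outline does track the broad architecture of Benoist's actual proof (irreducibility, an invariant properly convex cone for the Zariski closure, reductivity, then a highest-weight classification), and your forward direction via Borel density is fine. But two of the steps you lean on are not supported by the reasons you give, and the third is where essentially all of the content lives.

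In step (b) you assert that $H=\overline{\Gamma}$ preserves the cone $C(\Omega)$ because ``preservation of $\overline{\Omega}$ is a semi-algebraic condition respected by taking the Zariski closure.'' Zariski closure does not respect semi-algebraic conditions, and $Aut(\Omega)$ need not be Zariski closed: for $\Omega$ a triangle in $\RP^2$ divided by a lattice $\Gamma\cong\Z^2$ of positive diagonal matrices, the Zariski closure of $\Gamma$ is the full real diagonal torus, which contains $\mathrm{diag}(-1,-1,1)$; this element carries the triangle to a different triangle and the torus preserves no properly convex cone whatsoever. (This example also shows the theorem is false for merely properly convex $\Omega$ with no irreducibility hypothesis, so the strict convexity you invoke in steps (a) and in excluding the ``other candidates'' is not optional --- it, or irreducibility, must be added to the hypotheses, as in Corollary~\ref{benoist_cor} where the paper actually uses the result.) The true statement --- that the Zariski closure of an \emph{irreducible} group preserving a properly convex cone preserves some properly convex cone --- is one of the main theorems of Benoist's paper, proved via the characterization of such groups as the positively proximal ones; it cannot be obtained by a one-line closure argument. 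Finally, the classification step (``running through the Dynkin classification, the only possibilities are\ldots'') is the heart of the theorem rather than a routine check: one must determine which irreducible representations of reductive groups admit invariant properly convex cones and then decide which of those cones are strictly convex, and this occupies the bulk of Benoist's article. As written, your proposal is a correct road map with the two hardest legs of the journey asserted rather than travelled.
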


To allow for non-orientable orbifolds, we need the following corollary to Benoist's Theorem.

\begin{cor}\label{benoist_cor} Let $\Gamma$ be a discrete subgroup of $\slnonepm$ that divides a strictly convex open set $\Omega$ in $\rpn$. The Zariski closure $\overline{\Gamma}$ is either  $O(n,1)$ or $SO(n,1)$ iff  $\Omega$ is an ellipsoid.  Otherwise  $\overline{\Gamma}$ is either $\slnonepm$ or $\slnone$. \end{cor}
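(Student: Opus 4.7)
The plan is to reduce the corollary to Benoist's Theorem \ref{theorem:benoist} by passing to the finite-index subgroup $\Gamma_0 := \Gamma \cap \slnone$, which has index $1$ or $2$ in $\Gamma$. The quotient map $\Omega/\Gamma_0 \to \Omega/\Gamma$ is a covering of degree $[\Gamma:\Gamma_0] \le 2$, so $\Omega/\Gamma_0$ is still compact and $\Gamma_0$ divides $\Omega$. Benoist's theorem, applied to $\Gamma_0$, then identifies $\overline{\Gamma_0}$ as $\sonone$ when $\Omega$ is an ellipsoid and as $\slnone$ otherwise.

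To pass from $\overline{\Gamma_0}$ to $\overline{\Gamma}$, I would invoke the standard fact that Zariski closure preserves finite-index inclusions. Writing $\Gamma = \Gamma_0 \cup g\Gamma_0$ for some $g \in \Gamma$ (in the nontrivial case), and using that Zariski closure commutes with finite unions, gives $\overline{\Gamma} = \overline{\Gamma_0} \cup g\overline{\Gamma_0}$, so $[\overline{\Gamma} : \overline{\Gamma_0}] \le 2$. Since $\Gamma$ preserves $\Omega$ and the stabilizer of $\Omega$ inside $\slnonepm$ is Zariski closed, $\overline{\Gamma}$ still preserves $\Omega$. Thus $\overline{\Gamma}$ is sandwiched between $\overline{\Gamma_0}$ and this stabilizer, with both indices at most $2$.

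Now split into the two cases. If $\Omega$ is an ellipsoid cut out by a quadratic form $q$ of signature $(n,1)$, then any $A \in \slnonepm$ preserving $\Omega$ must satisfy $q \circ A = \lambda q$ for some $\lambda > 0$, and $|\det A| = 1$ forces $\lambda = 1$; so the $\slnonepm$-stabilizer of $\Omega$ is exactly $\onone = O(q)$, and $\sonone \subseteq \overline{\Gamma} \subseteq \onone$ with $[\onone : \sonone] = 2$ yields $\overline{\Gamma} \in \{\sonone, \onone\}$. If $\Omega$ is not an ellipsoid, then $\slnone \subseteq \overline{\Gamma} \subseteq \slnonepm$ with index $2$ gives $\overline{\Gamma} \in \{\slnone, \slnonepm\}$. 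For the converse half of the iff, if $\overline{\Gamma} \in \{\sonone, \onone\}$ then $\overline{\Gamma_0} \subseteq \overline{\Gamma} \cap \slnone = \sonone$, so Benoist's theorem forces $\Omega$ to be an ellipsoid. I do not expect any real obstacle here: the corollary is essentially bookkeeping to extend Benoist's theorem from $\slnone$ to $\slnonepm$, and the only nontrivial ingredients are the Zariski-closure index estimate and the identification of the $\slnonepm$-stabilizer of an ellipsoid with $\onone$.
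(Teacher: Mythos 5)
Your proof is correct and follows essentially the same route as the paper: restrict to $\Gamma_+=\Gamma\cap\slnone$, which still divides $\Omega$, apply Benoist's theorem to it, and combine the index-two relationships with the identification of the $\slnonepm$-stabilizer of an ellipsoid as $O(n,1)$. You actually supply more detail than the paper's proof (the explicit coset argument giving $[\overline{\Gamma}:\overline{\Gamma_+}]\le 2$ and the converse direction of the iff), where the paper just cites that Zariski closures of subgroups are algebraic subgroups and says the result follows.
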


\begin{proof} Let $\Gamma_+ = \Gamma \cap SL(n+1,\R)$.   Then $\Gamma_+$ is at most an index two subgroup of $\Gamma$, and therefore divides $\Omega$.  By \ref{theorem:benoist}, the Zariski closure of $\Gamma_+$ is  $SO(n,1)$ when $\Omega$ is an ellipsoid and $SL(n+1,\R)$ otherwise.  When $\Omega$ is an ellipsoid the full stabilizer of $\Omega$ in $\slnonepm$  is $O(n,1)$.   Note that $SO(n,1)$ and $SL(n+1,\R)$ are respectively index two subgroups of $O(n,1)$ and $\slnonepm$.  Since the Zariski closure of a subgroup of an algebraic group is an algebraic subgroup (pg. 99 of \cite{onischik_vinberg}), the result follows.
\end{proof}

Choose an inner product $\left<,\right>$ on $\rnone.$ Let $\Omega\subset \rpn$ be a strictly convex open set. Let $\Omega_+\subset \sn$ be one of the pre-images  of $\Omega.$ Then
$$\{\ v\in\rnone\ :\ \forall x\in\Omega_+\ \left<v,x\right>\ > 0 \ \}$$ is a convex set and its image in $\rpn$ is called the {\em dual} of $\Omega$ and is denoted by $\Omega^*.$ A different choice of inner product results in a projectively equivalent domain. For convenience we will use the standard inner product.

It is immediate that if $A\in\pglnone$ preserves $\Omega$ then the transpose $A^t$ preserves $\Omega^*.$ The map
$$d:\pglnone\rightarrow\pglnone$$  given by $d(A)=(A^t)^{-1}$ is called the {\em global Cartan involution} or  the {\em dual map} and $d(A)$ is the {\em dual matrix.} It is an isomorphism and a different choice of inner product changes $d$ by composition with an inner automorphism.

It follows that the restriction of duality gives an isomorphism 
$$d_{\Omega}:Aut(\Omega)\rightarrow Aut(\Omega^*).$$
If $\Gamma < Aut(\Omega)$ is a discrete group then $Q=\Omega/\Gamma$ is a projective orbifold. The {\em dual orbifold} is $Q^*=\Omega^*/\Gamma^*$ where $\Gamma^*=d(\Gamma)$ is called the {\em dual group} to $\Gamma.$ 

\begin{prop} There is an orbifold isomorphism $f:Q\rightarrow Q^*$ such that $$d\circ hol_{Q}= hol_{Q^*}\circ f_*.$$\end{prop}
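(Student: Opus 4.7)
The plan is to construct a $\Gamma$-equivariant homeomorphism $\tilde f\colon \Omega \to \Omega^*$, where $\Gamma$ acts on $\Omega^*$ via the dual map $d$, and then descend to an orbifold isomorphism $f\colon Q \to Q^*$ satisfying the stated holonomy identity.

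First I would verify that $Q^* = \Omega^*/\Gamma^*$ is a bona fide compact strictly convex real projective orbifold. Duality preserves strict convexity: any line segment in $\partial \Omega^*$ would correspond to two distinct supporting hyperplanes of $\Omega$ at a single boundary point, contradicting strict convexity of $\Omega$. The image $\Gamma^* = d(\Gamma)$ is discrete in $\pglnone$ because $d$ is a Lie group automorphism, and $\Gamma^*$ divides $\Omega^*$ (a classical property of projective duality, cf.\ Vinberg and Benoist). Hence $\pi_1^{orb}(Q^*)$ is identified with $\Gamma^*$ via $hol_{Q^*}$.

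Next I would produce the equivariant homeomorphism. The sets $\Omega$ and $\Omega^*$ are both contractible open cells carrying proper cocompact actions of $\Gamma$—the second action through the group isomorphism $d\colon \Gamma \to \Gamma^*$. They are therefore equivalent models for the universal cover of a $K(\Gamma,1)$ orbifold, with matching local isotropy (the $d$-image of a finite stabilizer of some $x \in \Omega$ is a finite subgroup of $\Gamma^*$ fixing some point of $\Omega^*$, acting by the same abstract representation on a neighborhood). Choosing combinatorially equivalent $\Gamma$-equivariant triangulations on both sides and interpolating simplex-by-simplex yields a $\Gamma$-equivariant homeomorphism $\tilde f\colon \Omega \to \Omega^*$. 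Equivariance, by definition, is $\tilde f \circ \gamma = d(\gamma) \circ \tilde f$ for every $\gamma \in \Gamma$.

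Descent gives an orbifold isomorphism $f\colon Q \to Q^*$. By the formula $f_*(\gamma) = \tilde f \circ \gamma \circ \tilde f^{-1}$ combined with the equivariance of $\tilde f$, we obtain $f_*(\gamma) = d(\gamma)$ in $\Gamma^*$, which under the holonomy identifications reads exactly $hol_{Q^*} \circ f_* = d \circ hol_Q$.

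The real obstacle is promoting an equivariant continuous map (automatic from contractibility and obstruction theory) to an equivariant \emph{homeomorphism}. For torsion-free $\Gamma$ this follows from the topological Borel conjecture applied to word-hyperbolic groups, which applies since Benoist has shown that the fundamental group of a closed strictly convex projective manifold is word hyperbolic. To handle the full orbifold case uniformly, I would instead invoke the Cheng--Yau affine sphere construction, which furnishes a canonical smooth $\Gamma$-equivariant diffeomorphism from a strictly convex divisible domain to its projective dual, and take $\tilde f$ to be this map.
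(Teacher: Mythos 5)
Your final sentence is, in fact, the paper's entire proof: the authors take $\tilde f:\Omega\rightarrow\Omega^*$ to be the canonical diffeomorphism coming from the affine sphere construction (they cite Loftin; Cheng--Yau is the underlying analytic input), and naturality gives the equivariance $\tilde f\circ\gamma = d(\gamma)\circ\tilde f$, from which the holonomy identity follows exactly as you say. So the core of your argument coincides with the paper's, and the triangulation/Borel-conjecture detour is unnecessary; it is also the weakest part of what you wrote, since for orbifolds with torsion the abstract matching of isotropy groups does not by itself produce an equivariant homeomorphism, which is precisely why the canonical affine-sphere map is the right tool.

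One genuine error to flag, though it is peripheral to the proposition: your argument that $\Omega^*$ is strictly convex is wrong as stated. A segment in $\partial\Omega^*$ corresponds to a pencil of supporting hyperplanes of $\Omega$ all touching at a \emph{single} boundary point, i.e.\ to a failure of $C^1$-smoothness of $\partial\Omega$ --- and a strictly convex domain can perfectly well have such corners, so this does not contradict strict convexity of $\Omega$. What rescues the conclusion for the domains at hand is Benoist's theorem that a divisible strictly convex domain has $C^1$ boundary; it is the $C^1$-regularity of $\partial\Omega$, not the strict convexity of $\Omega$, that dualizes to strict convexity of $\Omega^*$. The justification you gave should not stand as written.
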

\begin{proof} The duality map provides an isomorphism $\pi_1^{orb}Q\rightarrow\pi_1^{orb}Q^*,$ however we must show they are orbifold isomorphic (if they are manifolds this means diffeomorphic). There is a natural diffeomorphism $f:\Omega\rightarrow\Omega^*$  defined using {\em affine spheres,} see for example \cite{loftin}. Naturality ensures that it is equivariant with respect  to the group actions.
\end{proof}

It follows that duality induces an involution on $C(Q)$ the space of projective structures on an orbifold $Q.$ A projective structure is called {\em self-dual} if it is projectively equivalent to is dual. Thus the equivalence classes of the self-dual structures are just the fixed points of this involution.

\begin{theorem}\label{theorem:converse}
Let $Q$ be a closed,  $n$-orbifold, where $n \geq 2$.  Then the only self-dual projective structures on $Q$ are hyperbolic.
\end{theorem}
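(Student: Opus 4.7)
The plan is to deduce from self-duality an algebraic identity satisfied by every element of $\Gamma$, extend it by Zariski density using Benoist's classification, and derive a contradiction unless $\Omega$ is an ellipsoid. Unfolding the equivalence relation on $C(Q)$ and the definition of the dual structure, self-duality means there is a projective transformation $A\in\pglnone$ covering a projective isomorphism $\Omega/\Gamma\to\Omega^*/\Gamma^*$ and conjugating the holonomy to $d\circ hol$; equivalently, $AgA^{-1}=(g^t)^{-1}$ in $\pglnone$ for every $g\in\Gamma$. Lifting to $\slnonepm$ via Lemma \ref{lemma:lift} and tracking the projective scalar, this becomes the matrix identity
\[
g^tAg=\lambda(g)\,A,\qquad g\in\Gamma,
\]
where $\lambda:\Gamma\to\mathbb{R}^*$ is a character. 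Thus $A$, which is nonsingular since $A\in\glnone$, is a relative $\Gamma$-invariant bilinear form on $\mathbb{R}^{n+1}$.

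Now suppose for contradiction that $\Omega$ is not an ellipsoid. By Corollary \ref{benoist_cor} the Zariski closure $\overline{\Gamma}$ contains $\slnone$. The condition ``$g^tAg$ lies on the line spanned by $A$'' is Zariski closed in $g$---it is the vanishing of all $2\times 2$ minors formed from the coordinate vectors of $g^tAg$ and of $A$ in the space of bilinear forms---so the identity $g^tAg=\lambda(g)A$ extends to every $g\in\overline{\Gamma}$, and in particular to every $g\in\slnone$. The induced $\lambda$ is then a continuous homomorphism $\slnone\to\mathbb{R}^*$, and since $\slnone$ is perfect it must be identically $1$. Hence $A$ gives a nonzero $\slnone$-invariant bilinear form on $\mathbb{R}^{n+1}$.

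This contradicts a standard fact: for $n\ge 2$ the standard representation $V=\mathbb{R}^{n+1}$ of $\slnone$ is not isomorphic to its dual $V^*$ (the diagram automorphism of $A_n$ exchanges the two extremal fundamental weights when $n\ge 2$), so Schur's lemma gives $\mathrm{Hom}_{\slnone}(V,V^*)=0$ and there are no nonzero invariant elements of $V^*\otimes V^*$. The contradiction forces $\Omega$ to be an ellipsoid, and the discussion preceding Benoist's theorem then identifies $Q$ as a real hyperbolic orbifold. The main subtle point I anticipate is the Zariski extension step: one must phrase the projective identity $AgA^{-1}=(g^t)^{-1}$ as a genuinely Zariski closed condition on $g$, which is precisely why it is cleaner to pass to the reformulated equation $g^tAg\in\mathbb{R}\cdot A$.
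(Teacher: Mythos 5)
Your proof is correct, and it shares the paper's overall skeleton --- extract from self-duality a conjugation identity on the holonomy, then invoke Benoist's dichotomy (Corollary \ref{benoist_cor}) to force $\Omega$ to be an ellipsoid --- but the crucial algebraic step is genuinely different. The paper takes the identity $(g^t)^{-1}=PgP^{-1}$ and simply applies the trace: $\mathrm{trace}(g)=\mathrm{trace}(g^{-1})$ is a single nontrivial polynomial condition on $\slnone$ for $n\ge 2$, so $\Gamma$ cannot be Zariski dense and Benoist's theorem finishes immediately. You instead argue in the contrapositive: assuming $\Omega$ is not an ellipsoid you propagate the full matrix relation $g^tAg\in\R\cdot A$ (correctly observed to be Zariski closed) to $\overline{\Gamma}\supseteq\slnone$, kill the character by perfectness, and contradict the nonexistence of a nonzero $\slnone$-invariant bilinear form on $\R^{n+1}$. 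Both arguments ultimately rest on the same fact --- for $n\ge 2$ the standard representation of $SL(n+1,\R)$ is not self-dual --- but the paper packages it as one explicit polynomial while you make the representation-theoretic reason explicit via Schur's lemma. Your version is slightly longer but arguably cleaner on one point: you track the projective scalar as a character $\lambda$, whereas the paper writes $(g^t)^{-1}=PgP^{-1}$ as an exact matrix equation and quietly suppresses a possible sign (harmless there, since one can square the trace identity). Either route is a complete proof.
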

\begin{proof} It is easy to see that hyperbolic orbifolds are self-dual. For the converse,   let $Q=\Omega/\Gamma$ be a self-dual  projective orbifold with dual $Q^*=\Omega^*/\Gamma^*.$  We may assume $\Gamma,\Gamma^*\subset\slnonepm.$ Self-duality implies there  is an element in $P\in SL(n+1,\R)$ such that for all $A\in\Gamma$ 
$$d(A)=(A^t)^{-1}=PAP^{-1}.$$   We claim that $\Gamma_+=\Gamma\cap \slnone$ is not Zariski dense in $\slnone.$ Indeed, this equation implies $f(A)=trace(A)-trace(A^{-1})$ is zero on $\Gamma_+$ and this is a non-trivial polynomial condition on $\slnone$  provided $n\ge 2.$ This proves the claim.

We now apply    theorem \ref{theorem:benoist}   to deduce that $\Omega$ is an ellipsoid. An ellipsoid can be mapped to the unit ball by a projective map, thus $\Gamma$ is conjugate by this map to a group which stabilizes the unit ball. The stabilizer of the unit ball is $O(n,1)$ and therefore $Q$ is equivalent to a hyperbolic orbifold.\end{proof}

Projective duality is due to the presence of the duality map on $\slnone.$  Since we have been unable to find a reference for the following theorem, we provide a proof.

\begin{theorem}[Automorphisms of $SL(n,{\mathbb C})$]\label{SLautomorphism}
For all $n\ge 2$ the group of holomorphic outer automorphisms of $SL(n,{\mathbb C})$ has order $2$ and is generated by the duality map, $d,$ that sends $A\mapsto (A^t)^{-1}.$ If $\sigma$ is an automorphism of $SL(n,{\mathbb C})$ then either $\sigma = \phi_g$ where $\phi_g$ is some inner automorphism, or  $\sigma = \phi_g \circ d$.  \end{theorem}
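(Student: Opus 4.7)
The plan is to view $\sigma$ as an $n$-dimensional complex representation of $SL(n,\mathbb{C})$, identify it up to equivalence, and read off the conclusion. Regarded as a map $\sigma: SL(n,\mathbb{C}) \to GL(n,\mathbb{C})$, $\sigma$ is faithful with image the standard copy of $SL(n,\mathbb{C})$, which acts irreducibly on $\mathbb{C}^n$; hence $\sigma$ is an $n$-dimensional irreducible holomorphic representation of $SL(n,\mathbb{C})$.

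The next step is the classification of such representations: for $n \geq 3$, up to equivalence the only $n$-dimensional irreducible holomorphic representations of $SL(n,\mathbb{C})$ are the standard representation $\mathrm{std}$ (highest weight $\omega_1$) and its dual $\mathrm{std}^{*} = \mathrm{std} \circ d$ (highest weight $\omega_{n-1}$), and these two are inequivalent. This is a standard application of the Weyl dimension formula, using the fact that the minimal faithful irreducible representation of $SL(n,\mathbb{C})$ has dimension $n$. Hence there exists $h \in GL(n,\mathbb{C})$ with either $\sigma(A) = h A h^{-1}$ for all $A$ (Case 1) or $\sigma(A) = h (A^t)^{-1} h^{-1}$ for all $A$ (Case 2). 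Rescaling $h$ by an $n$-th root of $\det(h)^{-1}$, which exists in $\mathbb{C}^{\times}$, places $h$ in $SL(n,\mathbb{C})$; Case 1 then gives $\sigma = \phi_h$ and Case 2 gives $\sigma = \phi_h \circ d$.

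Finally I would verify that $d$ is not itself inner for $n \geq 3$: if $h A h^{-1} = (A^t)^{-1}$ for all $A \in SL(n,\mathbb{C})$, then $A^t h A = h$ for all $A$, so $SL(n,\mathbb{C})$ would preserve a nondegenerate bilinear form, which is impossible since the stabilizer of any nondegenerate bilinear form in $GL(n,\mathbb{C})$ is a proper orthogonal or symplectic subgroup. Consequently the outer automorphism group has order exactly $2$, generated by $d$.

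The main technical obstacle is justifying the representation-theoretic classification used in the second step; if one wishes to avoid invoking the Weyl dimension formula, an alternative route passes to the Lie algebra $\mathfrak{sl}(n,\mathbb{C})$, reduces any automorphism (via conjugacy of Cartan subalgebras) to one preserving the diagonal Cartan, and then uses that for the root system of type $A_{n-1}$ the group of root-system automorphisms modulo the Weyl group is $\mathbb{Z}/2$ (for $n \geq 3$), generated by the Dynkin-diagram flip, which is realized by the derivative $X \mapsto -X^t$ of $d$.
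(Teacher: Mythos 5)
Your proof is correct and takes a genuinely different route from the paper. The paper differentiates $\sigma$ to get an automorphism of the Lie algebra $\mathfrak{sl}(n,\C)$, uses connectedness of $\slnc$ to see that $D:Aut(\slnc)\to Aut(\mathfrak{sl}(n,\C))$ is injective, and then quotes the structure theorem $Aut(\mathfrak g)/Inn(\mathfrak g)\cong Aut(\text{Dynkin diagram})$, checking that the diagram flip of $A_{n-1}$ is realized by the derivative of $d$. You instead regard $\sigma$ as an $n$-dimensional faithful irreducible holomorphic representation and invoke the classification: for $n\ge 3$ the only $n$-dimensional irreducibles are $\mathrm{std}$ and $\mathrm{std}^*$ (highest weights $\omega_1$, $\omega_{n-1}$), which are inequivalent. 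What your route buys: it produces the conjugating element $h$ directly inside the group (so there is no issue of integrating a Lie algebra automorphism back to the group), and it replaces the $Out(\mathfrak g)\cong Aut(\text{Dynkin})$ isomorphism by the more elementary fact that equivalent irreducibles are intertwined by an invertible matrix. What it costs: you must know that no other dominant weight yields dimension exactly $n$, which (as you note) rests on the Weyl dimension formula or on monotonicity of dimension in the fundamental-weight coordinates together with $\dim\Lambda^k\C^n=\binom{n}{k}>n$ for $2\le k\le n-2$. Your bilinear-form argument that $d$ is not inner is fine (decompose the invariant form into symmetric and skew parts and use irreducibility of $S^2(\C^n)^*$ and $\Lambda^2(\C^n)^*$ for $n\ge 3$), though it is redundant once you know $\omega_1\ne\omega_{n-1}$.

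One remark on scope: your repeated restriction to $n\ge 3$ is not an omission but the correct hypothesis for the ``order $2$'' claim. For $n=2$ the duality map is inner, since $(A^t)^{-1}=JAJ^{-1}$ for $A\in SL(2,\C)$ with $J$ the standard symplectic matrix, so $Out(SL(2,\C))$ is trivial; equivalently, $\Lambda^2(\C^2)^*$ is the trivial representation, which is exactly where your form argument breaks down. The paper's proof overlooks this because it labels the Dynkin diagram of $\mathfrak{sl}(n,\C)$ as $A_n$ with $n$ vertices; it is $A_{n-1}$ with $n-1$ vertices, and $A_1$ has no nontrivial diagram automorphism. The second sentence of the theorem (every automorphism is $\phi_g$ or $\phi_g\circ d$) remains true for $n=2$, and the paper only ever applies the result to matrices of size $n+1\ge 3$, so nothing downstream is affected; but your version of the statement is the accurate one.
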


\begin{proof}
Let $\phi$ be a holomorphic automorphism of $\slnc.$  Differentiating  $\phi$  induces an automorphism on the tangent space at the identity, the Lie algebra $\mathfrak{sl}(n,{\mathbb C})$.  Furthermore, since $\slnc$ is connected, the differential map $D: Aut(\slnc) \ra Aut(\mathfrak{sl}(n,{\mathbb C}))$ is injective (\S 2 Theorem 4 \cite{onischik_vinberg}).  The inner automorphisms of $\slnc$ map onto the inner automorphisms, called the inner derivations, of $\mathfrak{sl}(n,\C)$.  

For a complex Lie algebra $G$, the group $Inn(G)$ forms a normal subgroup of the group of automorphisms, and $Aut(G)/Inn(G)$ is isomorphic to the automorphisms group of the Dynkin diagram for $G$ (pg. 202  \cite{onischik_vinberg}).   In the case of $\mathfrak{sl}(n,{\mathbb C})$, the Dynkin diagram is $A_n$, which is a linear graph with $n$ vertices and therefore has one nontrivial automorphism.  Since the map $Aut(\slnc) \ra Aut(\mathfrak{sl}(n,{\mathbb C}))$ is injective, $Aut(\slnc)/Inn(\slnc)$ has at most two elements.  The map $A \mapsto (A^T)^{-1}$, after differentiating, induces the non-trivial automorphism of the dynkin diagram.
\end{proof}  

Recall that $\glnc$ is a connected Lie group and therefore $\pglnc$ is also connected.  The natural map $\slnc\rightarrow\pglnc$ is an $n$-fold cyclic covering: the kernel is the center of $\slnc,$ which is isomorphic to the cyclic subgroup of order $n$ in $\C$ generated by a primitive $n$'th root of unity.

\begin{cor}[Automorphisms of $\pglnc$]\label{pglautomorphism}
For all $n\ge2$ the group of holomorphic outer automorphisms $Out(\pglnc)\cong{\mathbb Z}_2$ is generated by the duality map. 
\end{cor}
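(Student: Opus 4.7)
The plan is to deduce the corollary from theorem \ref{SLautomorphism} via the natural covering map $\pi:\slnc\rightarrow\pglnc.$ Since $\slnc$ is simply connected for $n\ge 2$ and the kernel of $\pi$ is the finite center $Z=Z(\slnc),$ $\pi$ is the universal covering of $\pglnc.$ The strategy is in three steps: lift a given $\psi\in\mathrm{Aut}(\pglnc)$ to $\slnc,$ apply \ref{SLautomorphism}, and then verify that duality remains outer after descent.

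First I would lift any holomorphic automorphism $\psi$ of $\pglnc$ to one of $\slnc.$ Using that the source is simply connected, the composite $\psi\circ\pi:\slnc\rightarrow\pglnc$ lifts uniquely along $\pi$ to a holomorphic map $\widetilde\psi:\slnc\rightarrow\slnc$ with $\widetilde\psi(I)=I.$ A standard argument shows $\widetilde\psi$ is a group homomorphism: the continuous map $(A,B)\mapsto\widetilde\psi(AB)\widetilde\psi(B)^{-1}\widetilde\psi(A)^{-1}$ projects to the identity in $\pglnc$ and hence lies in the discrete center $Z,$ and it equals $I$ at $(I,I),$ so it is constantly $I.$ The same procedure applied to $\psi^{-1}$ produces a two-sided inverse for $\widetilde\psi,$ so $\widetilde\psi\in\mathrm{Aut}(\slnc).$ By \ref{SLautomorphism}, $\widetilde\psi$ is either inner or inner composed with the duality $d(A)=(A^t)^{-1};$ since $d$ preserves $Z$ and is intertwined by $\pi$ with the duality map on $\pglnc,$ projecting $\widetilde\psi$ down exhibits $\psi$ as either inner or inner-composed-with-duality.

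The only real obstacle is to confirm that $d$ remains nontrivial in $Out(\pglnc).$ I would first establish uniqueness of lifts: any $\tau\in\mathrm{Aut}(\slnc)$ projecting to the identity on $\pglnc$ satisfies $\tau(A)A^{-1}\in Z$ for every $A,$ and centrality of these values makes $A\mapsto\tau(A)A^{-1}$ a continuous group homomorphism $\slnc\rightarrow Z;$ since $\slnc$ is connected and $Z$ is discrete, this map is constantly $I,$ so $\tau=\mathrm{id}.$ If the duality on $\pglnc$ were conjugation by some $[g],$ then any $\slnc$-lift $g$ would give an inner $\phi_g\in\mathrm{Aut}(\slnc)$ projecting to the same automorphism as $d,$ so by uniqueness $\phi_g=d$ on $\slnc,$ contradicting \ref{SLautomorphism}. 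Hence $d$ represents the unique nontrivial class of $Out(\pglnc),$ completing the proof.
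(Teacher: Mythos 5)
Your argument is correct, but it reaches the corollary by a genuinely different route than the paper. The paper's proof stays at the Lie-algebra level: since $\slnc\rightarrow\pglnc$ is a finite covering, the two groups have the same Lie algebra, so one simply repeats the argument of Theorem~\ref{SLautomorphism} one level down --- the differential $Aut(\pglnc)\rightarrow Aut(\mathfrak{sl}(n,{\mathbb C}))$ is injective because $\pglnc$ is connected, the Dynkin-diagram computation bounds the outer automorphism group by ${\mathbb Z}_2$, and duality realizes the nontrivial class. You instead use Theorem~\ref{SLautomorphism} as a black box and transport it through the universal cover: simple connectivity of $\slnc$ lets you lift any holomorphic automorphism of $\pglnc$ to one of $\slnc$, and your uniqueness-of-lifts lemma shows that lifting is injective on automorphisms and matches inner with inner in both directions, so in effect $Out(\pglnc)\cong Out(\slnc)$. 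The trade-off is that you need the additional topological input that $\slnc$ is simply connected (the paper needs only that the cover is finite), but in return you avoid re-invoking the structure theory (injectivity of the differential, Dynkin diagrams), and your explicit verification that duality remains outer downstairs is cleaner than the paper's implicit one. One caveat shared by both proofs: for $n=2$ the duality map on $\slnc$ is inner (conjugation by the standard symplectic matrix), so Theorem~\ref{SLautomorphism}, and hence the corollary, really require $n\ge3$; this is harmless for the application, which only uses $PGL(n+1,{\mathbb C})$ with $n\ge2$.
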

\begin{proof} 
The group $\slnc$ is  a finite cover of $\pglnc$ and therefore the Lie algebras of $\pglnc$ and $\slnc$ are isomorphic. Arguing as in \ref{SLautomorphism}  shows that $Out(\pglnc)$ is cyclic of order two, generated by the duality map. 
\end{proof}

We record some facts about normal subgroups in Lie groups  in the following lemma.

\begin{lemma}\label{normal_subgroup}
Every nontrivial normal subgroup of  $\pglnc$, $\ponc$, or\\ $\psonc$ contains $\psonc$.
\end{lemma}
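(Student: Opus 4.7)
The plan is to reduce each of the three assertions to a simplicity statement. The key underlying facts are that $\pglnc$ is abstractly simple for $n\ge 2$ (via the isomorphism $\pglnc\cong PSL(n,\mathbb{C})$, available because $\mathbb{C}$ is algebraically closed, so every element can be rescaled to have determinant one), and that $\psonc$ is abstractly simple in the dimensions relevant here. Both simplicity statements are classical and can be cited from any standard reference on Lie groups or finite simple groups.

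For $\pglnc$, simplicity makes the claim immediate: the only nontrivial normal subgroup is $\pglnc$ itself, which certainly contains $\psonc$. The same reasoning handles $\psonc$ directly: by simplicity, any nontrivial normal subgroup is all of $\psonc$.

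The interesting case is $\ponc$. When $n$ is odd, $-I\notin SO(n,\mathbb{C})$, so $O(n,\mathbb{C})=SO(n,\mathbb{C})\times\{\pm I\}$ and hence $\ponc=\psonc$, reducing to the previous case. When $n$ is even, $\psonc$ sits as an index-two normal subgroup of $\ponc$. Given a nontrivial normal subgroup $N\trianglelefteq\ponc$, the intersection $N\cap\psonc$ is normal in $\psonc$, so by simplicity it is either trivial or all of $\psonc$, and in the latter case we are done. If instead $N\cap\psonc=\{e\}$, then $N$ injects into the quotient $\ponc/\psonc\cong\mathbb{Z}/2$, forcing $N=\{e,z\}$ with $z$ an order-two central element of $\ponc$. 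I would then show the centre of $\ponc$ is trivial (for $n\ge 3$) by lifting $z$ to $\hat z\in O(n,\mathbb{C})$, noting that centrality forces $\hat z r\hat z^{-1}\in\{r,-r\}$ for every reflection $r$, and observing that $r$ and $-r$ have different eigenvalue multisets when $n\ge 3$ and so are not conjugate in $O(n,\mathbb{C})$. Hence $\hat z$ commutes with every reflection, and since reflections generate $O(n,\mathbb{C})$ by Cartan--Dieudonn\'e, $\hat z\in\{\pm I\}$ and $z$ is trivial in $\ponc$, a contradiction.

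The main obstacle is the centre-of-$\ponc$ calculation in the even-dimensional case; this is the only step that exploits something specific to the orthogonal group beyond abstract simplicity of $\psonc$. A minor secondary concern is the exceptional dimension $n=4$, where $\psonc$ fails to be simple (it splits as $PSL(2,\mathbb{C})\times PSL(2,\mathbb{C})$); either the applications in the paper lie outside that case, or the two diagonal factors have to be handled symmetrically by observing that any normal subgroup must project onto a normal subgroup in each factor and hence is a product of the simple factors.
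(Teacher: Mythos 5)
Your argument is correct in the range where the lemma holds, but it takes a genuinely different route from the paper's for the orthogonal cases. The paper handles $\pglnc\cong\pslnc$ by abstract simplicity exactly as you do; for $\ponc$ and $\psonc$, however, it argues infinitesimally: a normal subgroup either meets the identity component in a subgroup with nonzero Lie algebra, which must then be all of the simple Lie algebra $\mathfrak{so}(n,\C)$ so that the subgroup contains $\psonc$, or else is discrete, hence centralized by the identity component and therefore trivial because $\ponc$ is centerless. You instead quote the abstract simplicity of $\psonc$ and then treat $\ponc$ by hand: the parity split, the observation that a normal subgroup of order two is automatically central, and the computation of the center via reflections and Cartan--Dieudonn\'e (which is valid over $\C$ since the form is nondegenerate and the characteristic is not $2$; your eigenvalue-multiset argument correctly rules out $\hat z r \hat z^{-1}=-r$ for $n\ge 3$). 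Your version is more elementary and makes the centerlessness of $\ponc$ explicit where the paper merely asserts it; the paper's version avoids the parity case split. Both ultimately rest on the same input, the simplicity of $\psonc$ --- abstract for you, of its Lie algebra for the paper.

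Your closing worry about $n=4$ is a genuine catch rather than a throwaway caveat: $PSO(4,\C)\cong PSL(2,\C)\times PSL(2,\C)$, and either factor is a nontrivial normal subgroup not containing $PSO(4,\C)$, so the lemma as stated is false for $n=4$; the paper's proof also silently assumes $\mathfrak{so}(n,\C)$ is simple, which fails there. This is not outside the paper's scope, since the lemma is applied to $PSO(n+1,\C)$ for orbifolds of dimension $n\ge 2$, so $3$-orbifolds land exactly on the exceptional case. Note, though, that your proposed repair does not rescue the statement itself: a normal subgroup of a product of two nonabelian simple groups is one of the four sub-products, so a single $PSL(2,\C)$ factor remains a counterexample to the lemma. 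The exception has to be absorbed into the main theorem's argument instead (there the orthogonal case is the hyperbolic one, where Mostow rigidity is ultimately invoked), not into this lemma.
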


\begin{proof}
The group $\pglnc\cong\pslnc$  has no non-trivial proper normal subgroups (page 227 \cite{rotman}).    

The group $\onc$ has two connected components, the one containing the identity is $\sonc$ (pg. 43 \cite{onischik_vinberg}).  Since $\onc$ is a simple Lie group with finite center, $\ponc$ is a centerless simple Lie group with identity component equal to $\psonc$.  The only proper normal subgroups in simple Lie groups are discrete.  By \cite{onischik_vinberg} pg. 45, any discrete normal subgroup of a connected Lie group is contained in the center.  Therefore, in all three cases, any nontrivial normal subgroup contains $\psonc$.
\end{proof}

%%%%%%%%%%%%%%%%%%%%%%%%%%%%%%%%%%%%%%%%%%%%%%%%%%%%%%%%%%%%%%%%%%%%%%%%%%%%%%
\section{Proof of Main Theorem}\label{section:main}
%%%%%%%%%%%%%%%%%%%%%%%%%%%%%%%%%%%%%%%%%%%%%%%%%%%%%%%%%%%%%%%%%%%%%%%%%%%%%%

The main theorem of this paper is that the Hilbert translation length function determines a strictly convex real projective structure up to projective duality.  We now state the main theorem more  precisely.

Suppose $Q$ is a closed strictly convex projective orbifold. The  translation length function defined in section \ref{section:background} is a map $\ell:\pi_1^{orb}Q\rightarrow{\mathbb R}.$
An element  $\alpha\in\pi_1^{orb}Q$ has finite order iff it is elliptic iff it has translation length zero. Otherwise the holonomy of $\alpha$ is  a proximal element $[A]\in\pglnone$ and 
$\ell(\alpha)=\log\left(\lambda_+/\lambda_-\right)$ where $\lambda_{\pm}$ are the eigenvalues of largest and smallest absolute value for any representative matrix  $A.$

\begin{theorem}\label{theorem:main}
Suppose that $Q$ is a closed  orbifold of dimension $n\ge 2.$
Let $$L: C(Q) \ra \R^{\pi_1^{orb}(Q)}$$ be the map which sends a strictly convex projective structure to its Hilbert translation length function.   For any $x \in C(Q)$,  we have $L^{-1}(L(x))=\{\ x,x^*\ \},$ where $x^*$ denotes the dual of $x$. Moreover $x=x^*$ iff $x$ is projectively self dual iff $x$ is a hyperbolic structure.
\end{theorem}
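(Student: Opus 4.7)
The plan is to follow the outline sketched in the introduction. Let $\rho, \rho' : \pi_1^{orb}Q \to \slnonepm$ be the lifts (via Lemma \ref{lemma:lift}) of the two holonomies, and form the diagonal $\rho_{\Delta} = (\rho,\rho')$ with image $\Gamma_{\Delta} \subset \slnonepm\times\slnonepm$. Complexifying, set $G := \slnonepmc\times\slnonepmc$ and let $H$ denote the Zariski closure of $\Gamma_{\Delta}$ in $G$, with coordinate projections $p_1, p_2$.

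First I would trap $H$ inside a proper subvariety of $G$. By Lemma \ref{lemma:Hilbert_length}, the equality $L(x)=L(x')$ means that for every $\gamma$ the pair $(\rho(\gamma),\rho'(\gamma))$ has a common extreme eigenvalue ratio, hence lies in the proximal pair locus of Definition \ref{proximal_pair}. Corollary \ref{samelength} shows its Zariski closure is a proper real algebraic subvariety of $\slnonepm\times\slnonepm$, whose complexification $V_{\mathbb C}$ is a proper subvariety of $G$. Since $V_{\mathbb C}$ is Zariski closed and contains $\Gamma_{\Delta}$, we conclude $H \subseteq V_{\mathbb C} \subsetneq G$.

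Next I would show that $H$ is (up to finite central kernels) the graph of a Lie-group isomorphism $\phi := p_2 \circ p_1^{-1}$ between its two projections. Corollary \ref{benoist_cor} gives that each $p_i(H)$ has Zariski closure either the full $\slnonepmc$ (the \emph{full} case) or a conjugate of the complexification of $O(n,1)$ (the \emph{hyperbolic} case). The kernel $\ker(p_2|_H)$ has the form $K\times\{I\}$ with $K$ Zariski closed and normal in $p_1(H)$; by Lemma \ref{normal_subgroup}, the relevant projectivizations ($\pglnonec$ or $\ponc$) are simple modulo finite center, so any such normal $K$ is either finite central or essentially the whole factor. A Goursat-style argument combined with $H \subseteq V_{\mathbb C}$ rules out the latter: one could then multiply on the right by an arbitrary element of $\{I\}\times\slnonec \subseteq H$ to produce pairs $(\rho(\gamma), \rho'(\gamma)\cdot k)$ whose translation lengths are easily made unequal, contradicting $H \subseteq V_{\mathbb C}$. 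Hence $\ker(p_2|_H)$ is finite central, and symmetrically for $p_1$; a dimension count rules out the mixed subcase (one full, one hyperbolic).

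Finally I would classify $\phi$ using Corollary \ref{pglautomorphism}. In the full case $\phi$ lifts to a holomorphic automorphism of $\pglnonec$, so either $\phi$ is inner (whence $\rho'$ is conjugate to $\rho$ and $x=x'$ by Corollary \ref{holgivesstructure}) or $\phi = \phi_g \circ d$ for an inner $\phi_g$, whence $\rho'$ is conjugate to $d\circ\rho$ and $x'=x^*$. In the hyperbolic subcase both structures are real hyperbolic and hence self-dual by Theorem \ref{theorem:converse}, so $x = x^* = x'$. The ``moreover'' clauses are then exactly Theorem \ref{theorem:converse}. The hardest part is the proper containment $H \subseteq V_{\mathbb C}\subsetneq G$: all of Section \ref{section:root_ratio}'s root-ratio and resultant machinery is needed to promote the transcendental identity ``same log of extreme eigenvalue ratio'' into a polynomial vanishing condition on matrix entries.
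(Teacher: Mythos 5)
Your architecture is the same as the paper's: trap the Zariski closure of the diagonal image in the complexified proximal-pair variety, use Corollary \ref{benoist_cor} to identify the coordinate projections, a Goursat/normal-subgroup argument with Lemma \ref{normal_subgroup} to get injectivity of the projections, and Corollary \ref{pglautomorphism} to classify the resulting automorphism. However, one step fails as written. You assert that for \emph{every} $\gamma$ the pair $(\rho(\gamma),\rho'(\gamma))$ lies in the proximal pair locus of Definition \ref{proximal_pair}. That locus is the closure of $(I,I)$ together with pairs of \emph{proximal} matrices of equal translation length; but the theorem is stated for orbifolds, and a finite-order $\gamma$ has elliptic holonomy which is in general neither proximal nor the identity, so $(\rho(\gamma),\rho'(\gamma))$ need not lie in that locus and the containment $\Gamma_\Delta\subseteq V$ breaks. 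The paper repairs this by taking $k$ to be the least common multiple of the orders of torsion elements, so that $r_i(g)^k$ is always $I$ or proximal, and replacing the proximal-pair variety $P$ by $\tau^{-1}(P)$ for the polynomial map $\tau(M,N)=(M^k,N^k)$; a common eigenvalue ratio $r$ for the original pair gives the common ratio $r^k$ for the $k$-th powers, so one still lands in a proper subvariety. Without some such device your argument only covers manifolds.

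Two further steps are asserted without justification. In the hyperbolic subcase you deduce $x=x^*=x'$ from self-duality, but $x=x^*$ and $x'=(x')^*$ do not imply $x=x'$: you still must show that two hyperbolic structures with the same marked length spectrum coincide. The paper does this by invoking Mostow rigidity and then Proposition \ref{theorem:not_isometric}; for $n=2$, where Mostow rigidity is unavailable, one needs classical marked-length-spectrum rigidity for hyperbolic surfaces or the observation that the isomorphism $\phi$ of the real forms is itself a conjugation. Second, in the full case Corollary \ref{pglautomorphism} produces an inner (or dual-composed-with-inner) automorphism of $\pglnonec$, i.e.\ conjugation by a \emph{complex} matrix, whereas equivalence in $C(Q)$ via Corollary \ref{holgivesstructure} requires conjugation in $\pglnone$; one must check that the conjugating element can be chosen real, which the paper does using the fact that both $\Gamma_i$ are real. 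Neither of these is fatal, but both need to be said.
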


\begin{proof} For $i=1,2$ suppose that $x_i\in C(Q)$ are two strictly convex projective structures on $Q.$ These determine two holonomy representations $r_i:\pi_1^{orb}Q\rightarrow\Gamma_i\subset\pglnone$ up to conjugacy in $\pglnone.$ 

We will show that there is an automorphism $\phi$ of $\pglnone$ such that $r_2=\phi\circ r_1.$ By \ref{SLautomorphism} it follows that $r_2$ is conjugate to either $r_1$ or to the dual $d\circ r_1.$ By \ref{holgivesstructure} the conjugacy class of the holonomy determines the projective structure and  the result then follows.

By \ref{lemma:lift} we can always lift the holonomy to $\slnonepm.$  To prevent proliferation of notation, for the next few paragraphs we shall use $r_i$ for the lifted holonomy and $\Gamma_i$ for its image. 

Since $Q$ is a compact orbifold, there are only finitely many different orders of torsion elements. We define $k$ to be  the least common multiple of these orders. It follows that for every $g\in\pi_1^{orb}Q$ that $\left(r_i(g)\right)^k$ is either $I$ or proximal.  

Let $P\subset \slnonepm \times \slnonepm$ be the proximal pair variety defined in \ref{proximal_pair}. Then $P$ contains all pairs of proximal matrices with the same translation length. It also contains $( I,I).$ The map 
$$\tau:\slnonepm \times \slnonepm\rightarrow \slnonepm \times \slnonepm$$
given by $\tau(M,N)=(M^k,N^k)$ is a polynomial map therefore $A=\tau^{-1}(P)$ is an affine algebraic variety. 

Given $g\in\pi_1^{orb}Q$ the elements $r_1(g)$ and $r_2(g)$ have the same translation length. If the translation length is zero then $g$ is torsion, so  $r_1(g)^k$ and $r_2(g)^k$ are both the identity.  Otherwise, lemma \ref{lemma:Hilbert_length} implies that $r_1(g)$ and $r_2(g)$ have a common eigenvalue ratio r and therefore  $r_1(g)^k$ and $r_2(g)^k$ have a common eigenvalue ratio of $r^k$.  In either case, $(r_1(g),r_2(g))\in A.$ Thus
\[Im(r_1\times r_2) = \{(r_1(g),r_2(g)) \, | \, g \in \pi_1^{orb}Q\} \subseteq A\subseteq  \slnonepm \times \slnonepm.\] 

Algebraic geometry is easier over ${\mathbb C}$ than over ${\mathbb R}.$ For this reason we will now complexify everything. The {\em complexification} of a real affine algebraic variety, $V\subset{\mathbb R}^n,$ is the complex affine variety $V_{\mathbb C}\subset{\mathbb C}^n$ obtained by taking the complex zeros of the real polynomials defining  $V.$ Observe that $V= {\mathbb R}^n\cap V_{\mathbb C}.$

Define $D\subset\slnonepmc\times\slnonepmc$ to be the Zariski closure over ${\mathbb C}$ of $Im(r_1\times r_2).$ Since $D\subset A_{\mathbb C}$ it is a proper subvariety of $\slnonepmc\times\slnonepmc$. Let $\overline{\Gamma}_i$ be the Zariski closure over ${\mathbb C}$ of $\Gamma_i$ in $\slnonepmc.$ By corollary \ref{benoist_cor}, $\overline{\Gamma}_i$ is either conjugate to $SO(n,1,{\mathbb C})$ or $O(n,1,{\mathbb C})$, or else equals $\slnonec$ or $\slnonepmc$.  In the former cases, these groups are isomorphic to $SO(n+1,{\mathbb{C}})$ and $O(n+1,{\mathbb C}),$   and we adjust by a conjugacy so that $\overline{\Gamma}_i= \sononec$ or $\ononec.$ Henceforth, in all cases we may assume that $\sononec\subseteq\overline{\Gamma}_i.$

Since $\tau(\sononec \times \sononec) = \sononec \times \sononec$, corollary \ref{samelength} implies  $A,$ and therefore $D,$ does not contain $\sononec\times \sononec.$  We record this fact for later reference.

\begin{remark}\label{observation}
$D$ does not contain $\sononec\times \sononec.$  
 \end{remark}
 
The Zariski closure of a subgroup of an algebraic group  is an algebraic subgroup (Lemma 2.2.4 of \cite{LAG}) therefore $D$ is a complex algebraic subgroup.  For $i=1,2$  let $p_i: D \ra \slnonepmc$ be the projection map onto the $i$'th coordinate. By (theorem 3, page 102 of \cite{onischik_vinberg}) the image under an algebraic homomorphism of a complex algebraic group is also a complex  algebraic group. It follows that $G_i=p_i(D)$ is a complex algebraic subgroup of $\slnonepmc.$ 

Now $G_i$ contains $\Gamma_i$ and $G_i$ is  algebraic so $\overline{\Gamma}_i\subseteq G_i.$  We claim that $$\overline{\Gamma}_i=G_i.$$ This is because $p_i^{-1}(\overline{\Gamma}_i)$ is a complex algebraic group which contains $Im(r_1\times r_2),$ thus $D\subset p_i^{-1}(\overline{\Gamma}_i)$ and $$G_i=p_i(D)\subseteq p_i(p_i^{-1}(\overline{\Gamma}_i))=\overline{\Gamma}_i.$$ 
Now we projectivize everything.
Let $P:\slnonepmc\rightarrow\pglnonec$ be the natural projection. Define $$PD=\(P\times P)(D)\subset\pglnonec\times\pglnonec.$$
 We have the two coordinate projections  $\pi_i:PD\rightarrow \pglnonec.$ Define $$PG_i=\pi_i(PD)=P(G_i)=P(\overline{\Gamma}_i)\subset\pglnonec.$$   We have shown that $PG_i$ is either $\pglnonec$,
  $\pononec$, or $\psononec$.

\medskip\noindent{\bf Claim.}  $\pi_1$ and $\pi_2$ are injective.

\medskip Suppose that $\pi_1$ is not injective. Then $\ker(\pi_1)$ is a non-trivial normal subgroup of $PD.$    Now $\ker(\pi_1)\subset 1\times \pglnonec$ and $\pi_2|1\times \pglnonec$ is an isomorphism.  Applying $\pi_2$ we see that  $\pi_2(\ker(\pi_1))$ (which is isomorphic to $\ker(\pi_1)$) is a non-trivial normal subgroup of  $\pi_2(PD)=PG_2.$  

Recall $PG_2$ is one of $\psononec$, $\pononec$ or $\pglnonec.$    By lemma \ref{normal_subgroup} any non-trival normal subgroup of these groups contains $\psononec$.  It follows that  $\pi_2(\ker(\pi_1))\supseteq \psononec$ thus $\ker (\pi_1)\supseteq 1\times \psononec.$ Since $PD$ contains this group it also contains $P\Gamma_1\times \psononec$ and therefore contains $P\overline{\Gamma}_1\times \psononec.$ Since $P\overline{\Gamma}_1=PG_1$ and $PG_1\supseteq \psononec$ it follows that $PD$ contains $\psononec \times \psononec.$ But this contradicts remark \ref{observation} which proves the claim.

\medskip

We now have an isomorphism $\phi=\pi_2\circ \pi_1^{-1}:PG_1\rightarrow PG_2$ which maps $\Gamma_1$ onto $\Gamma_2.$ Therefore $PG_1 = PG_2.$  If $PG_1=\psononec$ or $\pononec$ this implies both projective structures are hyperbolic (follows from  \ref{benoist_cor} and complexifying). Since $Q$ is closed, Mostow rigidity (Theorem $A'$ page 4 of \cite{mostow})  implies  the two structures are isometric, and thus projectively isomorphic by \ref{theorem:not_isometric}.

If $G_1=\pglnonec$  then $\phi$ is either an inner automorphism, or the composition of the duality map and an inner automorphism by \ref{pglautomorphism}.  The assertion of the theorem involves $\pglnone$ rather than $\pglnonec.$ We need to prove the inner automorphism of $\pglnonec$  actually extends an inner automorphism of  $\pglnone.$

Suppose  an element $A\in\glnonec$ conjugates $\Gamma_1$ to $\Gamma_2.$ Since the latter subgroups are real  $A$ may be chosen to have real entries.  Then we obtain an inner automorphism of $\pglnone.$ A similar remark applies in the case of the dual map composed with an inner automorphism. In either case,  since $\Gamma_i\subset G_i$, restriction gives $r_2=\phi\circ r_1$.

\end{proof}

\begin{cor}
There exist two  diffeomorphic strictly convex real projective surfaces $M$ and $N$ with equivalent marked length spectra that are not isometric.
\end{cor}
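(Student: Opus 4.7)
The plan is to exhibit the surfaces $M$ and $N$ as a single smooth surface equipped with two distinct projective structures that are projectively dual to one another. Once one such non-hyperbolic pair exists, the desired pair of isospectral non-isometric surfaces follows immediately from Theorem~\ref{theorem:main}.

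First I would fix a closed orientable surface $\Sigma_g$ of genus $g\ge 2$ and invoke the work of Goldman and Choi on the deformation space of convex $\rp2$-structures: this space $C(\Sigma_g)$ is a cell of dimension $16g-16$, strictly containing Teichm\"uller space (dimension $6g-6$) as a proper subset. In particular there exists a strictly convex projective structure $x\in C(\Sigma_g)$ whose holonomy is not conjugate into $\mathrm{PO}(2,1)$, i.e.\ a structure $x$ which is not hyperbolic. Set $M=(\Sigma_g,x)$ and $N=(\Sigma_g,x^*)$, where $x^*$ denotes the dual projective structure provided by Section~\ref{section:proof_of_converse}.

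Now I would read off the three conclusions from the results already in hand. Theorem~\ref{theorem:main} gives $L(x)=L(x^*)$, so $M$ and $N$ have equivalent marked Hilbert length spectra. The same theorem, combined with Theorem~\ref{theorem:converse}, tells us that $x=x^*$ precisely when $x$ is hyperbolic; since by construction $x$ is not hyperbolic, $x$ and $x^*$ are distinct points of $C(\Sigma_g)$, hence $M$ and $N$ are not projectively equivalent. Proposition~\ref{theorem:not_isometric} then upgrades non-equivalence to non-isometry of the Hilbert metrics. Finally, since both structures live on the same underlying smooth manifold $\Sigma_g$, the surfaces $M$ and $N$ are trivially diffeomorphic.

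There is no substantive obstacle here: the only external input is the classical existence of non-hyperbolic strictly convex $\rp2$-structures on higher-genus surfaces, and everything else is a direct consequence of the results proved earlier in the paper.
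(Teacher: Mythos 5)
Your argument is correct and follows essentially the same route as the paper: use the Goldman--Choi dimension count ($-8\chi(\Sigma_g)=16g-16$ versus $6g-6$ for Teichm\"uller space) to produce a non-hyperbolic structure $x$, take its dual $x^*$, and then combine Theorem~\ref{theorem:main}, Theorem~\ref{theorem:converse}, and Proposition~\ref{theorem:not_isometric} to conclude that $x$ and $x^*$ are isospectral, projectively inequivalent, and hence non-isometric. Your write-up is just a more explicit version of the paper's proof.
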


\begin{proof}
Goldman shows that the dimension of the deformation space of strictly convex projective structures on a surface $S$ is $-8 \chi(S)$.  This implies that there exists non-hyperbolic strictly convex projective structures.  Together theorems \ref{theorem:converse} and \ref{theorem:main} imply there are non-equivalent projective structures on $S$ with the same marked length spectra.  The corollary then follows from proposition \ref{theorem:not_isometric}.
\end{proof}

As an example consider the $2$-orbifold $Q=S^2(3,3,4).$ According to Goldman and Choi, $C(Q)\cong\R^2.$ There is a unique hyperbolic structure on this orbifold, which we may assume is $0\in\R^2$, and the duality map induces an involution on $C(Q)$ which is a rotation of order $2$ fixing $0.$ All the non-zero points correspond to projective structures that are not self dual.

A closed hyperbolic $n$-manifold which contains an embedded, totally geodesic codimension-$1$ submanifold admits a $1$ parameter family of strictly convex projective structures obtained by a projective version of the bending construction. Thus isospectral examples also exist in dimensions $3$ and $4.$

%%%%%%%%%%%%%%%%%%%%%%%%%%%%%%%%%%%%%%%%%%%%%%%%%%%%
\end{document}